\newtheorem{theorem}{Theorem}[section]
\newtheorem{proposition}[theorem]{Proposition}
\newtheorem{corollary}[theorem]{Corollary}
\newtheorem{lemma}[theorem]{Lemma}
\newtheorem{question}[theorem]{Question}
\theoremstyle{definition}
\newtheorem{definition}[theorem]{Definition}
\newtheorem{remark}[theorem]{Remark}
\newtheorem*{case2'}{Case 2$'$}
\newtheorem{theorem-named}{}
\newtheorem{theorem-labeled}{Theorem}
\newtheorem{definition-named}{}
\newtheorem{conjecture-named}{}
\newtheorem{case-named}{}
\numberwithin{equation}{section}
\newcommand{\R}{\mathbb{R}}
\newcommand{\C}{\mathbb{C}}
\def\N{\mathbb{N}}
\def\R{\mathbb{R}}
\def\C{\mathbb{C}}
\def\hat{\widehat}
\def\et{\quad\mbox{and}\quad}
\newcommand{\red}{\textcolor{red}}
\begin{document}


\title{Genus one cobordisms between torus knots}

\author{Peter Feller}
\address{ETH Zurich, R\"amistrasse 101, 8092 Zurich, Switzerland}
\email{peter.feller@math.ch}
\urladdr{people.math.ethz.ch/~pfeller/}
\author{JungHwan Park}
\address{School of Mathematics, Georgia Institute of Technology, Atlanta, GA, USA}
\email{junghwan.park@math.gatech.edu }
\urladdr{people.math.gatech.edu/~jpark929/}
\def\subjclassname{\textup{2010} Mathematics Subject Classification}
\expandafter\let\csname subjclassname@1991\endcsname=\subjclassname
\expandafter\let\csname subjclassname@2000\endcsname=\subjclassname
\subjclass{%
57M25, 
    57M27, 
  57N70}

\keywords{Torus knots, cobordism distance, genus one cobordisms, exact Lagrangian cobordisms, decomposable cobordisms, Gordian distance, Gordian graph}

\begin{abstract}
We determine the pairs of torus knots that have a genus one cobordism between them, with one notable exception. This is done by combining obstructions using $\nu^+$ from the Heegaard Floer knot complex and explicit constructions of cobordisms. As an application,
we determine the pairs of torus knots related by a single crossing change. Also, we determine the pairs of Thurston-Bennequin number maximizing Legendrian torus knots that have a genus one exact Lagrangian cobordism, with one exception.
\end{abstract}

\maketitle

\section{Introduction}\label{sec:intro}

Let $K$ and $J$ be knots---smooth non-empty connected oriented 1-submanifolds of the $3$-sphere $S^3$. The \emph{cobordism distance} between $K$ and $J$, denoted by $d(K,J)$, is defined to be the smallest integer that arises as the genus of a smoothly embedded oriented surface in $S^3\times[0,1]$ with boundary (appropriately oriented) $J\times \{0\}$ and $K\times \{1\}$. Equivalently, $d(K,J)$ maybe defined as $g_4(J\#-K)$, where $g_4(K)$ denotes the smooth 4-ball genus of a knot $K$, $\#$ denotes connected sum of knots, and $-K$ denotes the knot obtained from $K$ by mirroring and reversing orientation. The cobordism distance and the $4$-ball genus of knots are generally hard to determine, but for torus knots the $4$-ball genus is known by the local Thom conjecture~\cite{Kronheimer-Mrowka:1993-1}.
However, the cobordism distance between two non-trivial torus knots 
is, in general, not understood beyond some partial progress.
Besides this being a natural next case
, the determination of the cobordism distance between torus knots is of interest because of connections to questions about the existence of deformations of singularities of plane curves~\cite{Arnold_normalforms,Borodzik-Livingston:2016-1, Feller_15_MinCobBetweenTorusknots}.

We describe what is known.
Non-isotopic torus knots have non-zero cobordism distance;
in fact, non-trivial positive torus knots are linearly independent in the concordance group~\cite{Litherland_79_SignaturesOFIteratedTorusKnots}. Trotter's classical knot signature~\cite{Trotter_62_HomologywithApptoKnotTheory} and the Tristram-Levine signatures~\cite{Tristram:1969-1,Levine:1969-1}
allow to determine the cobordism distance of most torus knots of two fixed braid indices up to a constant~\cite{Baader_ScissorEq}.
The modern Heegaard Floer concordance invariants $\nu^+$~\cite{Hom-Wu:2016-1} and $\Upsilon$~\cite{OSS_2014}
lead to better bounds on cobordism distance depending on the braid indices~\cite{Bodnar-Celoria-Golla:2017-1,FellerKrcatovich_16_OnCobBraidIndexAndUpsilon}.
And for small braid indices, these invariants allow to compute the cobordism distance completely~\cite{Feller_15_MinCobBetweenTorusknots,BaaderFellerLewarkZentner_16}.

In this text, we determine which pairs of torus knots have cobordism distance one, with one exception.
For the exceptional pair, we know the distance is at most two. This brings us to ask:

\begin{question}\label{Q:genus1cob}
Is the cobordism distance between $T_{3,14}$ and $T_{5,8}$ one?\end{question}
All other pairs are covered by our main result:
\begin{theorem}\label{thm:main} Let $\{T_{p,q}, T_{p',q'} \}$ be any pair of non-trivial positive torus knots; that is, both $\{p,q\}$ and $\{p',q'\}$ are two pairs of coprime integers larger than or equal to $2$. If the pair is one of the following
\begin{enumerate}[font=\upshape]
\item \label{fam1}$\{ T_{2,2n+1}, T_{2,2n+3}\}$ for $n\geq1$,
\item \label{fam2}$\{ T_{3,3n+1}, T_{3,3n+2}\}$ for $n\geq1$,
\item \label{fam3}$\{ T_{3n+1,9n+6}, T_{3n+2,9n+3}\}$ for $n\geq1$,
\item \label{fam4}$\{ T_{2n+1,4n+6}, T_{2n+3,4n+2}\}$ for $n\geq1$,
\item \label{fam5}$\{ T_{2,2n-3}, T_{3,n}\}$ for $n\in\{4,5,7,8\}$,
\item \label{fam6}$\{ T_{2,7}, T_{3,4}\}$, $\{ T_{2,9}, T_{3,5}\}$, $\{ T_{2,11}, T_{4,5}\}$, $\{ T_{3,7}, T_{4,5}\}$, $\{ T_{3,10}, T_{4,7}\}$, $\{ T_{4,9}, T_{5,7}\}$,
\end{enumerate}
then there exists a genus one cobordism between them $(i.e.~d(T_{p,q}, T_{p',q'})=1)$.

If the pair is not one of \eqref{fam1}--\eqref{fam6} nor
\begin{enumerate}[font=\upshape]
\setcounter{enumi}{6}
\item \label{fam7}$\{ T_{3,14}, T_{5,8}\}$,
\end{enumerate}
then there is no genus one cobordism between them $(i.e.~d(T_{p,q}, T_{p',q'}) \geq 2)$.\end{theorem}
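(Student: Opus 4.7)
The plan is to split the theorem into two halves. The \emph{existence} half exhibits a genus one cobordism for each pair in families (1)--(6). The \emph{non-existence} half rules out a genus one cobordism for every other pair, except (7), where no obstruction is expected to apply. This split matches the reformulation $d(K,J) = g_4(K \# -J)$, together with the fact that a genus one cobordism can be built either from a single crossing change or from two band attachments.

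For the existence direction, my approach is to realize each cobordism by an explicit elementary move on a positive braid. When two torus knots differ in $4$-genus by exactly one---which is the case for every pair in (1)--(4)---a single band attachment on a suitable positive braid representative should suffice. Family (1) is immediate: resolving one crossing in the braid $(\sigma_1)^{2n+3}$ yields $T_{2,2n+1}$. Families (2)--(4) should admit analogous one-band descriptions after choosing compatible positive braid presentations for the two knots, possibly after Markov-stabilizing or cabling one side in order to align the two braids. For the sporadic pairs in (5)--(6), some of which share the same $4$-genus, a single band does not suffice and one must exhibit the cobordism directly---via two band moves, via a single crossing change, or by using the fact that torus knots arise as smoothings of plane curve singularities and that certain neighboring singularities share cobordant smoothings. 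Each such pair would be verified individually.

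For the non-existence direction, the strategy is to prove $\nu^+(T_{p,q} \# -T_{p',q'}) \geq 2$ for every pair outside of (1)--(7). Since $\nu^+$ bounds the smooth $4$-genus from below and $d(T_{p,q}, T_{p',q'}) = g_4(T_{p,q} \# -T_{p',q'})$, such an estimate forces $d(T_{p,q}, T_{p',q'}) \geq 2$. The plan is to exploit the staircase description of the knot Floer complex of torus knots and its behavior under connected sums and mirroring, in the spirit of the lattice-combinatorial computations of Bodnar--Celoria--Golla and Feller--Krcatovich. I would organize the argument by the pair of braid indices $(\min(p,q), \min(p', q'))$: first a uniform staircase estimate rules out all pairs whose braid indices differ too much or are too large, and then a finite list of remaining near-diagonal small pairs is handled by direct computation.

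The main obstacle is the non-existence half. Infinitely many pairs must be excluded, so the $\nu^+$ bound must be uniform in the braid indices, yet simultaneously sharp enough to permit the genuine distance-one pairs in (1)--(6) and the undetermined pair (7). In particular the pair $\{T_{3,14}, T_{5,8}\}$ must escape the obstruction---otherwise we would contradict the open question posed in the paper. Calibrating the staircase estimate so that it cleaves exactly along the boundary described in the theorem, and aligning it precisely with the list of constructions in the existence half, is where the technical heart of the proof lies.
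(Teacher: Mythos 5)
Your high-level split and your choice of $\nu^+$ as the obstruction match the paper, but both halves have concrete gaps. On the existence side, a single band attachment turns a knot into a two-component link, so it can never realize a cobordism between two \emph{knots}: resolving one crossing in $(\sigma_1)^{2n+3}$ gives the torus \emph{link} $T_{2,2n+2}$, not $T_{2,2n+1}$. Every genus one knot-to-knot cobordism has Euler characteristic $-2$ and hence needs two saddles; the actual dichotomy the paper exploits is between pairs whose $4$-genera differ by one (delete \emph{two} positive generators from a braid word) and the equal-genus pairs in family (6) (delete one generator and add one). More seriously, the three pairs $\{T_{3,7},T_{4,5}\}$, $\{T_{3,10},T_{4,7}\}$, $\{T_{4,9},T_{5,7}\}$ are not routine: the paper devotes explicit, nontrivial braid-word manipulations to each (for $\{T_{4,9},T_{5,7}\}$, passing through an $8$-braid and a commutation element $\Diamond$ before collapsing back to a $4$-braid), and ``verified individually'' does not discharge this. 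Families (1), (2), (5) and half of (6) are quoted from prior work, and (3), (4) follow from Baader's genus $\tfrac{|b-a|(c-1)}{2}$ cobordism between $T_{a,bc}$ and $T_{b,ac}$ with $c=3$ resp.\ $c=2$; your ``align the braids after stabilizing or cabling'' is the right instinct but is not a proof.

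On the obstruction side there are two issues. First, $\nu^+$ is not symmetric (e.g.\ $\nu^+(T_{2,2n+1}\#-T_{2,2n+3})=0$ while the reverse order gives $1$), so the statement you need is $\max\{\nu^+(K\#-J),\nu^+(J\#-K)\}\geq 2$; for many excluded pairs one of the two orders is $0$ or $1$, so proving $\nu^+\geq 2$ in a fixed order will fail. Second, and more fundamentally, your plan to reduce to ``a finite list of remaining near-diagonal small pairs'' cannot work: families (1)--(4) are infinite and contain pairs of arbitrarily large braid index that must \emph{survive} the obstruction, so the boundary you need to trace is not finite. The step that makes the paper's case analysis tractable is the preliminary observation that $\max\{\nu^+\}\leq 1$ forces $|\tau(T_{p,q}\#-T_{p',q'})|\leq 1$, i.e.\ $(p-1)(q-1)-(p'-1)(q'-1)\in\{-2,0,2\}$, followed by the Bodnar--Celoria--Golla semigroup formula applied to the second semigroup element to force $|p-p'|\leq 2$; after that, comparison of a handful of further semigroup elements, parametrized explicitly in $p$ and an auxiliary integer $m$, settles each infinite family and isolates the sporadic pairs (with $\{T_{5,17},T_{7,12}\}$ killed by a direct $\nu^+$ computation and $\{T_{3,14},T_{5,8}\}$ escaping). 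Without this reduction your ``uniform staircase estimate'' has nothing concrete to bite on.
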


Note that the cobordism distance between two negative torus knots is equal to the cobordism distance between their mirrors, which are positive torus knots.
Also, by using the Ozsv{\'a}th-Szab{\'o} $\tau$ invariant \cite{Ozsvath-Szabo:2003-3}, it is straightforward to verify that the cobordism distance between a positive torus knot and a negative torus knot is equal to the sum of their $4$-ball genera.
Hence, considering pairs of positive torus knots, as is done in Theorem~\ref{thm:main}, suffices to classify all pairs of torus knots with cobordism distance one.

The cobordisms in Theorem~\ref{thm:main} are constructed using positive braid manipulations.
Before we describe how we obstruct the existence of genus one cobordisms between pairs of torus knots 
using $\nu^+$ from Heegaard Floer theory, we discuss an application to an unknotting question.

\subsection*{Torus knots of Gordian distance one}
Given two knots $K$ and $J$, their \emph{Gordian distance} is defined to be the minimal number of crossing changes needed to get from $K$ to $J$ (see e.g.~\cite{Murakami:1985-1}).
As a consequence of Theorem~\ref{thm:main} (and its proof), we find which pairs of torus knots have Gordian distance one:

\begin{corollary}\label{cor:3}
Let $T_{p,q}$ and $T_{p',q'}$ be distinct positive torus knots. The knots $T_{p,q}$ and $T_{p',q'}$ have Gordian distance one $($i.e.~one can be turned into the other by one crossing change$)$ if and only if $\{T_{p,q}, T_{p',q'} \}$ is one of the following:

\begin{enumerate}[font=\upshape]
\item $\{ T_{2,2n+1}, T_{2,2n+3}\}$ for $n\geq0$,
\item $\{ T_{3,3n+1}, T_{3,3n+2}\}$ for $n\geq1$,
\item $\{ T_{2,5}, T_{3,4}\}$, $\{T_{2,7}, T_{3,5}\}$.
\end{enumerate}
\end{corollary}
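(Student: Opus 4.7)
The plan is to combine Theorem~\ref{thm:main} with classical Levine--Tristram signature obstructions. Since a single crossing change realizes a genus-one cobordism, any pair of torus knots at Gordian distance one has cobordism distance at most one; by Theorem~\ref{thm:main}, such a pair must lie in one of families (1)--(6), be the exceptional pair $\{T_{3,14},T_{5,8}\}$, or involve the unknot $T_{2,1}$ (which Theorem~\ref{thm:main} excludes). The unknot case is immediate: by Kronheimer--Mrowka, $u(T_{p,q})=(p-1)(q-1)/2$, so $u(T_{p,q})=1$ forces $\{p,q\}=\{2,3\}$, accounting for the $n=0$ member of family (1) in the corollary.

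For the upper-bound part, I would exhibit explicit crossing changes. For family (1), change a single crossing in the standard representation $\sigma_1^{2n+3}$ of $T_{2,2n+3}$; the resulting braid word simplifies to $\sigma_1^{2n+1}$, whose closure is $T_{2,2n+1}$. For family (2), a short calculation in the three-strand braid group takes $(\sigma_1\sigma_2)^{3n+2}$ to a diagram of $T_{3,3n+1}$ after one crossing change (verified via Markov moves). Finally, for the two sporadic pairs $\{T_{2,5},T_{3,4}\}$ and $\{T_{2,7},T_{3,5}\}$, I would present explicit diagrams differing by a single crossing; these realizations are classical and appear in standard Gordian-distance tables.

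For the lower bound, the remaining cobordism-distance-one pairs---families (3) and (4) for every $n\geq 1$, family (5) for $n\in\{7,8\}$, every pair in family (6) besides the two sporadic ones already used, and the exceptional pair---must be shown to have Gordian distance at least two. The principal tool is the Murasugi--Tristram inequality: for any $\omega\in S^1$ at which neither Alexander polynomial vanishes,
$$|\sigma_\omega(K)-\sigma_\omega(J)|\leq 2\cdot d_G(K,J).$$
Using Brieskorn's closed formula expressing $\sigma_\omega(T_{p,q})$ as a signed count of lattice points in the open rectangle $(0,p)\times(0,q)$ subject to a condition depending on $\omega$, I would verify---uniformly in $n$ for families (3) and (4), and by finite inspection for the sporadic and exceptional cases---that some $\omega$ witnesses a signature gap of at least $3$.

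The main obstacle will be pairs where the classical Trotter signature $\sigma_{-1}$ fails to separate the two knots, most notably $\{T_{2,7},T_{3,4}\}$ from family (6) (both have $\sigma_{-1}=-6$ and $g_4=3$) and potentially $\{T_{3,14},T_{5,8}\}$. For such pairs the search must genuinely range over $\omega\neq -1$, and if $\sigma_\omega$ alone should fall short I would fall back on the Montesinos trick: a crossing change lifts to a $\pm 1/2$-Dehn surgery between the double branched covers, which for torus knots are the Brieskorn spheres $\Sigma(2,p,q)$; Heegaard Floer $d$-invariant constraints on half-integral surgeries between Brieskorn spheres then supply the required obstruction. I expect Tristram--Levine to handle the bulk of the cases, with at most a short list requiring this surgical fallback.
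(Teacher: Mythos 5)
Your reduction via Theorem~\ref{thm:main} and your treatment of the ``if'' direction are fine, but the core of your lower-bound argument cannot work. For every pair in families \eqref{fam3}, \eqref{fam4}, \eqref{fam6}, and family \eqref{fam5} with $n\in\{7,8\}$, the cobordism distance is \emph{exactly one} (that is the content of Proposition~\ref{prop:existenceofcobs}), and the standard bound $\tfrac{1}{2}|\sigma_\omega(K)-\sigma_\omega(J)|\leq d(K,J)$ for regular $\omega$ therefore forces $|\sigma_\omega(K)-\sigma_\omega(J)|\leq 2$ for \emph{every} regular $\omega$. The signature gap of at least $3$ that you propose to exhibit does not exist for any of these pairs; the unsigned Murasugi--Tristram inequality is exactly as strong as the signature bound on cobordism distance, which the paper already shows is too weak here (it even fails to separate $T_{5,17}$ from $T_{7,12}$ at cobordism distance two). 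Your fallback via the Montesinos trick and $d$-invariants of Brieskorn spheres is left entirely undeveloped, so as written the only-if direction is not established for any pair outside items (1)--(3) of the corollary other than those already killed by Theorem~\ref{thm:main}.

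The missing idea is to use the \emph{signed} (directional) refinements of these inequalities rather than their absolute values. Under a positive-to-negative crossing change both $\nu^+$ and $-\sigma_\omega/2$ can only decrease, and by at most one; under a negative-to-positive change they can only increase (Propositions~\ref{prop:nucrossing} and~\ref{prop:signaturecrossing}). A sequence of crossing changes turning $K$ into $L$ turns $K\#-L$ into a slice knot, so if $\nu^+(K\#-L)>0$ the sequence must contain a positive-to-negative change, and if \emph{also} $\nu^+(L\#-K)>0$ it must contain a negative-to-positive change as well --- hence at least two changes, of opposite types. The ``furthermore'' part of Lemma~\ref{lem:minandmaxnu^+} verifies that both values are positive precisely for families \eqref{fam3}, \eqref{fam4}, \eqref{fam5} with $n\in\{7,8\}$, the last four pairs of \eqref{fam6}, and \eqref{fam7}. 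For the two remaining pairs $\{T_{2,7},T_{3,4}\}$ and $\{T_{2,9},T_{3,5}\}$, where only one of the two $\nu^+$ values is positive, one obtains the crossing change of the opposite sign from the signed signature inequality at suitable $\omega\neq -1$ (e.g.\ $-\sigma_{e^{2\pi i\alpha}}(T_{3,4}\#-T_{2,7})/2=1$ for $\alpha\in(\tfrac{2}{12},\tfrac{3}{14})$). You correctly flagged $\{T_{2,7},T_{3,4}\}$ as the hard case, but the resolution is this asymmetry of the invariants under signed crossing changes, not a larger unsigned gap.
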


Here is a bit of context on this result.
The study of the Gordian distance goes back to Wendt's considerations on the unknotting number  or Gordian number $u(K)$ of a knot---the Gordian distance of a knot to the unknot~\cite{Wendt:1937-1}.
The Gordian graph is the graph that has isotopy classes of knots as vertices and an edge between any two vertices with Gordian distance one. One way to phrase Corollary~\ref{cor:3} is the following.
The induced subgraph on isotopy classes of torus knots is given by edges between the pairs described in Corollary~\ref{cor:3} and edges between their mirrors.
Note that the study of the Gordian distance between torus knots (the distance in the Gordian Graph between vertices given by torus knots) turns out to be very subtle; see e.g.~\cite{GambaudoGhys_BraidsSignatures}. We only treat the distance one case.

The point of Corollary~\ref{cor:3} is not that the given pairs of torus knots are related by a crossing change; this is well-known and fits well with a related concept, the existence of so called $\delta$-constant deformations between the corresponding simple singularities of plane curves (see~\cite{Borodzik-Livingston:2016-1} for how $\delta$-constant deformations yield unknottings up to concordance).
Instead, we use Theorem~\ref{thm:main} to exclude most pairs of torus knots to have Gordian distance one and then discuss the remaining pairs using the same invariant as for Theorem~\ref{thm:main} and Tristram-Levine signatures; see Section~\ref{sec:Gordian}.

\subsection*{Obstructing cobordisms using $\nu^+$}

As an obstruction to having cobordisms of genus one, we use $\nu^+$---a positive integer valued knot invariant defined by Hom and Wu \cite{Hom-Wu:2016-1} satisfying
$$\nu^+(J\# -K)\leq g_4(J\# -K)=d(K,J)$$ for all knots $K$ and $J$. Using a recipe (see \cite[Theorem 1.1]{Bodnar-Celoria-Golla:2017-1}) to determine $\nu^+$ for connected sums of torus knots in terms of their semi-groups (invariants more generally associated with knots of plane curve singularities), we establish the following.

\begin{proposition}\label{prop:nu}
Let $T_{p,q}$ and $T_{p',q'}$ be distinct non-trivial positive torus knots. We have
\[ \max \{ \nu^+(T_{p,q} \# -T_{p',q'}), \nu^+(T_{p',q'} \# -T_{p,q}) \} \leq 1\]
if and only if $\{T_{p,q}, T_{p',q'} \}$ is one of the pairs in the families \eqref{fam1}--\eqref{fam7} in Theorem~\ref{thm:main}.

In particular, if a pair of distinct non-trivial positive torus knots is not in the families \eqref{fam1}--\eqref{fam7}, then they do not bound a genus one cobordism.
\end{proposition}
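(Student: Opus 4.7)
The plan is to apply the recipe of Bodnar--Celoria--Golla \cite[Theorem 1.1]{Bodnar-Celoria-Golla:2017-1}, which computes $\nu^+$ of a connected sum of algebraic knots from their associated numerical semigroups via the corresponding $V$-sequences. For the positive torus knot $T_{p,q}$, the relevant semigroup is $\Gamma_{p,q}=\langle p,q\rangle \subset \Z_{\geq 0}$. Unwinding the recipe at level $\leq 1$ converts the hypothesis $\max\{\nu^+(T_{p,q}\#-T_{p',q'}),\,\nu^+(T_{p',q'}\#-T_{p,q})\}\leq 1$ into a concrete combinatorial inequality comparing the counting functions $n\mapsto\#(\Gamma_{p,q}\cap[0,n))$ and $n\mapsto\#(\Gamma_{p',q'}\cap[0,n))$. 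Taking the maximum makes the condition symmetric in the two semigroups, and the proposition is thereby reduced to classifying, purely combinatorially, the pairs of distinct non-trivial positive torus knots whose semigroups satisfy this bound.

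First I would make this inequality explicit in terms of the generators $(p,q)$ and $(p',q')$ (without loss of generality assuming $p\leq q$, $p'\leq q'$, and $(p,q)\neq(p',q')$), so that verifying the condition amounts to bounding how many representations $ap+bq$ lie in each initial segment. Second I would perform a finiteness reduction: whenever $\min(p,p')$ exceeds an explicit small constant and the pair is not already captured by one of the parametric families \eqref{fam1}--\eqref{fam4}, elementary estimates on the gap distributions of $\Gamma_{p,q}$ versus $\Gamma_{p',q'}$ produce at least two unaccounted gaps in one direction, forcing $\nu^+\geq 2$. This cuts the search to finitely many residual pairs. Third, within the remaining finite range I would compute the counting functions directly from the definitions of the semigroups and check the inequality pair by pair; the surviving pairs are exactly those in \eqref{fam5}--\eqref{fam7}, while the families \eqref{fam1}--\eqref{fam4} are precisely the one-parameter perturbations that preserve the comparison.

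The main obstacle is calibrating the finiteness reduction sharply enough. The asymptotic bound must be strong enough to eliminate all but a manageable finite list of braid-index pairs, yet loose enough to preserve the infinite families \eqref{fam1}--\eqref{fam4} and the sporadic pair \eqref{fam7}; the semigroup comparison is delicate precisely because $(p,q)$ and $(p',q')$ can have very different arithmetic structures. Once this reduction is established, the case-by-case verification within each fixed small braid index is routine, reducing to checking finitely many inequalities on arithmetic progressions generated by $p$ and $q$. The implication to the absence of a genus one cobordism between pairs outside \eqref{fam1}--\eqref{fam7} then follows immediately from the standard inequality $\nu^+(J\#-K)\leq d(K,J)$.
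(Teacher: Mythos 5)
Your overall framework coincides with the paper's: apply the Bodn\'ar--Celoria--Golla recipe to turn the $\nu^+$ condition into a comparison of the semigroups $\Gamma_{p,q}=\langle p,q\rangle$ and $\Gamma_{p',q'}=\langle p',q'\rangle$, classify the admissible pairs combinatorially, and then deduce the ``in particular'' clause from $\nu^+(J\#-K)\leq g_4(J\#-K)=d(K,J)$. That last step is fine, and the first reduction you should make explicit is the one hiding inside the recipe: since $\Gamma_{p',q':p,q}\geq 0$ and $\tau$ computes $g_4$ of torus knots, the hypothesis forces $(p-1)(q-1)-(p'-1)(q'-1)\in\{-2,0,2\}$, and comparing the second semigroup elements ($\Gamma_{p,q}(2)=p$) forces $0\leq p'-p\leq 2$ (with $p\leq p'$).

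The genuine gap is your finiteness reduction. Bounding $\min(p,p')$ by a constant does \emph{not} cut the problem down to finitely many residual pairs: for each fixed $(p,p')$ and each allowed genus difference, the constraint only expresses $q$ as an affine function of $q'$, leaving a one-parameter infinite family (e.g.\ all pairs $\{T_{2,q},T_{3,q'}\}$ with $q-1-2(q'-1)\in\{-2,0,2\}$). Worse, there are infinite families with \emph{unbounded} braid index that lie outside \eqref{fam1}--\eqref{fam4} and must still be excluded, such as $\{T_{p,2p+1},T_{p+1,2p-1}\}$ for $p\geq 5$; here the semigroups agree at their first five elements and the obstruction only appears at the sixth ($\Gamma_{p,2p+1}(6)=3p+1$ versus $\Gamma_{p+1,2p-1}(6)=3p+3$), so no crude ``two unaccounted gaps'' estimate keyed to small elements will see it. What is actually required --- and what the paper does --- is to parametrize the surviving candidates as two-parameter families $\{T_{p,mp+c},T_{p+\delta,m(p-1)+c}\}$ for $\delta\in\{0,1,2\}$ and then, for each regime of $(m,p)$, exhibit a specific index $n\in\{3,4,6\}$ at which $\Gamma_{p',q'}(n)-\Gamma_{p,q}(n)$ is too large; the families \eqref{fam1}--\eqref{fam4} and the sporadic pairs \eqref{fam5}--\eqref{fam7} are exactly the parameter values where every such comparison fails to give a contradiction. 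You have correctly identified this calibration as ``the main obstacle,'' but the proposal does not resolve it, and as written the third step (``check the inequality pair by pair in the remaining finite range'') is not a finite check. Separately, the ``if'' direction --- that every pair in \eqref{fam1}--\eqref{fam7} really does satisfy $\max\{\nu^+,\nu^+\}\leq 1$ --- is asserted but not argued; it needs its own (short) semigroup computation, as in the paper's Lemma~\ref{lem:minandmaxnu^+}.
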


\subsection*{Obstructing cobordisms using Tristram-Levine signatures}
At this point some readers might wonder why the authors are using $\nu^+$ instead of say classical invariants, such as the Tristram-Levine signatures, which also obstruct the existence of cobordisms in the topological category. However, Tristam-Levine signatures are not sufficient to obtain Theorem~\ref{thm:main}. Indeed, the pair of torus knots $T_{5,17}$ and $T_{7,12}$ has cobordism distance at least two (by Theorem~\ref{thm:main}), but the lower bound
\[\max_{\omega\in S^1\text{ regular for $K$ and $J$}}\tfrac{1}{2}\left|\sigma_{\omega}(K)-\sigma_{\omega}(J)\right|\leq d(K,J)
\]
evaluated for $K=T_{5,17}$ and $J=T_{7,12}$ is
\[\max_{\omega\in S^1 \text{ regular for $K$ and $J$}}\tfrac{1}{2}\left|\sigma_{\omega}(T_{5,17})-\sigma_{\omega}(T_{7,12})\right|=1.\]

As a consequence, one may wonder the following about differences between the smooth and topological category:
\begin{question}Does there exist a pair of positive torus knots where there exists a locally flat genus one cobordism between them but not a smooth one?\end{question}
In addition to $T_{5,17}$ and $T_{7,12}$, the torus knots $T_{5,18}$ and $T_{8,11}$ constitute another pair that has smooth cobordism distance at least two by Theorem~\ref{thm:main}, but the authors do not know whether the topological cobordism distance is one.

Even for families of torus knots where experimental evidence suggests that Tristram-Levine signatures could suffice as obstructions, the authors do not see how to achieve this, given that the formulas for Tristram-Levine signatures of torus knots are unwieldy. 

Finally, as a side remark, we note that Theorem~\ref{thm:main} could not have been proved using $\Upsilon$ (a lower bound for $\nu^+$ introduced by Ozsv\'{a}th, Stipsicz, and Szab\'{o}~\cite{OSS_2014} that has the benefit of being a (collection) of concordance homomorphism(s)). Indeed, $\Upsilon$ does not obstruct the pair $T_{5,17}$ and $T_{7,12}$ from having cobordism distance one.
\subsection*{Positive braids, decomposable Lagrangian cobordisms, and Hopf plumbing}
For a positive integer $n$, the standard group presentation for Artin's \emph{braid group on $n$ strands}~\cite{Artin_TheorieDerZoepfe}, denoted by $B_n$, is given by generators $a_1,\ldots, a_{n-1}$ (known as Artin generators) subject to the \emph{braid relations}
\[a_ia_{i+1}a_i=a_{i+1}a_{i}a_{i+1}\text{ for }\; 1\leq i\leq n-2\et
 a_ia_j=a_ja_i \text{ for }\; |i-j|\geq 2.\] We refer to~\cite{Birman_74_BraidsAMSStudies} for details on braids, including the notions of braid closures and braid diagrams (as used in Figures~\ref{fig:beta} to~\ref{fig:45to37}).

Given a positive braid $\beta$ (that is, a braid that is the product of positive powers of Artin generators), its closure has an associated Legendrian representative $\Lambda_\beta$ in the standard contact structure of $\mathbb{R}^3$, which is well-defined up to Legendrian isotopy and maximizes the Thurston-Bennequin number (see Section~\ref{sec:lag} for details).
In fact, two positive braids with the same closure yield Legendrian isotopic Legendrian representatives~\cite[Corollary~1.13]{Etnyre-VanHornMorris:2011-1}. Therefore, one has a canonical Legendrian representative for every link that is the closure of a positive braid. 

Our constructions of cobordisms come from manipulating positive braids as follows: cyclic permutation of braid words (corresponding to an isotopy), positive Markov stabilization (corresponding to an isotopy), and deletion of a positive generator (corresponding to a 1-handle attachment). It turns out that these, in fact, correspond to \emph{decomposable Lagrangian cobordisms}, which are exact Lagrangian cobordisms (see Definition~\ref{Def:LagCob}) that are broken into elementary pieces associated to Legendrian isotopies, pinches, and births. We note that Lagrangian cobordisms are directed: existence of a Lagrangian cobordism from $\Lambda$ to
$\Lambda'$, does not imply that there exists a Lagrangian cobordism from $\Lambda'$ to
$\Lambda$.
\begin{lemma}\label{lemma:posbraidstodecompcobs} 
Let $\beta$ and $\beta'$ be positive braids such that there is a sequence of positive braids $\beta_0,\beta_1,\ldots,\beta_n$ with $\beta_0=\beta$ and $\beta_n=\beta'$ such that, for all $j\in\{0,1,\ldots,n-1\}$, $\beta_{j+1}$ and $\beta_{j}$ are related by one of the following:
\begin{itemize}[font=\upshape]
\item[(i)] $\beta_{j+1}$ is obtained from $\beta_{j}$ by cyclic permutation,
  \item[(ii)] $\beta_{j+1}$ is obtained from $\beta_{j}$ by positive Markov stabilization or destabilization, or
  \item[(iii)] $\beta_{j+1}=\beta_{j}a$, where $a$ is one of the standard positive Artin generators.
\end{itemize}

\noindent Then there exists a decomposable Lagrangian cobordism from $\Lambda_{\beta}$ to $\Lambda_{\beta'}$.\end{lemma}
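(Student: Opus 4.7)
The plan is to verify that each of the three elementary moves (i)--(iii) individually produces one of the allowed elementary pieces of a decomposable Lagrangian cobordism -- either an exact Lagrangian cylinder traced out by a Legendrian isotopy, or a Lagrangian saddle produced by a pinch move -- and then to concatenate the resulting $n$ cobordisms into a single one going from $\Lambda_\beta=\Lambda_{\beta_0}$ to $\Lambda_{\beta'}=\Lambda_{\beta_n}$.

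Moves (i) and (ii) both preserve the isotopy class of the closure as a topological link. Hence $\beta_j$ and $\beta_{j+1}$ are then two positive braids with the same closure, and by the already cited result of Etnyre and Van Horn-Morris~\cite[Corollary~1.13]{Etnyre-VanHornMorris:2011-1} their canonical Legendrian representatives $\Lambda_{\beta_j}$ and $\Lambda_{\beta_{j+1}}$ are Legendrian isotopic. Any such Legendrian isotopy traces out an exact Lagrangian cylinder in the symplectization of the standard contact $\mathbb{R}^3$, which is one of the allowed elementary pieces of a decomposable Lagrangian cobordism.

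For move (iii), the closure of $\beta_{j+1}=\beta_j a$ is obtained from the closure of $\beta_j$ by inserting one additional positive crossing at a prescribed location; equivalently, the oriented resolution of that crossing recovers $\hat\beta_j$. A Legendrian pinch (saddle) move performed at a positive crossing of a front projection is precisely the second type of elementary piece of a decomposable Lagrangian cobordism, as introduced by Ekholm-Honda-K\'alm\'an and used widely since; under the standard conventions it realizes an exact Lagrangian $1$-handle cobordism whose negative end is the Legendrian closure of the resolved braid and whose positive end is the Legendrian closure of the unresolved one. Applied to our situation, this produces a decomposable Lagrangian cobordism from $\Lambda_{\beta_j}$ to $\Lambda_{\beta_{j+1}}$.

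Concatenating these $n$ elementary Lagrangian cobordisms then yields the desired decomposable Lagrangian cobordism from $\Lambda_\beta$ to $\Lambda_{\beta'}$. The main point requiring care is move (iii): one must unambiguously identify the canonical Legendrian representative of the closure of a positive braid (which exists precisely by the Etnyre-Van Horn-Morris result together with the Thurston-Bennequin-maximization property) with the front diagram on which the pinch move is performed, so that the negative and positive ends of the resulting Lagrangian saddle cobordism are indeed $\Lambda_{\beta_j}$ and $\Lambda_{\beta_{j+1}}$ in the correct order. Once this identification is pinned down, no further computation is needed and the lemma follows by a formal assembly of standard facts.
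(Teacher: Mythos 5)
Your proof is correct and follows the same overall strategy as the paper's: verify that each of the moves (i)--(iii) is realized by elementary pieces of a decomposable Lagrangian cobordism, then stack the resulting pieces. The one genuine difference is in how moves (i) and (ii) are handled. The paper exhibits explicit front-diagram Legendrian isotopies realizing cyclic permutation (Figure~\ref{fig:permutation}) and positive Markov (de)stabilization (Figure~\ref{fig:markov}), so that the cylinder is visibly a composition of the moves 1--3 of Figure~\ref{fig:lagrangian}. You instead deduce the existence of a Legendrian isotopy abstractly from the Etnyre--Van Horn-Morris result \cite[Corollary~1.13]{Etnyre-VanHornMorris:2011-1} that positive braids with the same closure have Legendrian isotopic canonical representatives; since any Legendrian isotopy of fronts decomposes into moves 1--3, this also yields a decomposable cylinder. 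Your route is shorter but imports a substantially deeper theorem where a direct picture suffices; the paper's route is self-contained and elementary. For (iii) both arguments use a pinch, and your orientation of the cobordism is the correct one: the negative end is $\Lambda_{\beta_j}$ and the positive end is $\Lambda_{\beta_j a}$, consistent with $\mathit{tb}(\Lambda_{\beta_j a})-\mathit{tb}(\Lambda_{\beta_j})=1=-\chi$. One small imprecision: the pinch move is performed on two suitably positioned parallel strands and creates a pair of cusps, not literally ``at a positive crossing,'' so an auxiliary Legendrian isotopy before and after the pinch is needed to exhibit the extra Artin generator --- this is exactly what the paper's Figure~\ref{fig:artin} supplies, and it is the ``point requiring care'' you flag (though the care needed is geometric, about putting the strands in pinch position, rather than about identifying the canonical representative).
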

We suspect that Lemma~\ref{lemma:posbraidstodecompcobs} is well-known to experts, but we provide a proof for completeness.
\begin{remark}We note that Lemma~\ref{lemma:posbraidstodecompcobs} fits well with the following analog for quasipositive braids. Rudolph showed that every closure of a quasipositive braid arises as the transverse intersection of a smooth complex curve in $\C^2$ with a round 3-sphere centered at the origin~\cite{Rudolph_83_AlgFunctionsAndClosedBraids}. Furthermore, it turns out that if one can get from one closure of a quasipositive braid to another using the analog of the above (i), (ii), and (iii) for quasipositive generators, then their closures are related by an algebraic cobordism; that is, there exist two round 3-spheres in $\C^2$ and a smooth complex curve intersecting them transversally such that the intersections are the two closures, respectively~\cite[Lemma~6]{Feller_15_MinCobBetweenTorusknots}. As a consequence, one can paraphrase Lemma~\ref{lemma:posbraidstodecompcobs} as follows: if one obtains one positive braid from the other by a sequence of (i), (ii), (iii), then the corresponding algebraic cobordism is in fact given as a decomposable Lagrangian one. 
\end{remark}

As a corollary of the proof of Theorem~\ref{thm:main}, we have the following. For $p,q\geq 1$ coprime, let $\Lambda_{p,q}$ denote the Legendrian knot $\Lambda_{\beta_{p,q}}$ that is associated with the positive $p$-braid
\[\beta_{p,q}=(a_1\cdots a_{p-1})^q\in B_p\] with closure the torus knot $T_{p,q}$.
For additional motivation of the study of $\Lambda_{p,q}$, we note that $\Lambda_{p,q}$ is the unique Legendrian representative of $T_{p,q}$ that maximizes the Thurston-Bennequin number~\cite{Etnyre-Honda:2001-1}.

\begin{theorem}\label{thm:genus1deccob} Let $(\Lambda_{p,q}, \Lambda_{p',q'})$ be any ordered pair of Thurston-Bennequin number maximizing Legendrian knots that represent non-trivial positive torus knots $T_{p,q}$ and $T_{p',q'}$, respectively. If the ordered pair is one of the following
\begin{enumerate}[font=\upshape]
\item \label{lfam1}$( \Lambda_{2,2n+1}, \Lambda_{2,2n+3})$ for $n\geq1$,
\item \label{lfam2}$( \Lambda_{3,3n+1}, \Lambda_{3,3n+3})$ for $n\geq1$,
\item \label{lfam3}$( \Lambda_{3n+1,9n+6}, \Lambda_{3n+2,9n+3})$ for $n\geq1$,
\item \label{lfam4}$( \Lambda_{2n+1,4n+6}, \Lambda_{2n+3,4n+2})$ for $n\geq1$,
\item \label{lfam5}$( \Lambda_{2,2n-3}, \Lambda_{3,n})$ for $n\in\{4,5,7,8\}$,
\item \label{lfam6} $( \Lambda_{2,11}, \Lambda_{4,5})$, 
\end{enumerate}
then there exists a genus one decomposable Lagrangian cobordism from $\Lambda_{p,q}$ to $\Lambda_{p',q'}$. 

If the pair is not one of \eqref{lfam1}--\eqref{lfam6} nor
\begin{enumerate}[font=\upshape]
\setcounter{enumi}{6}
\item \label{lfam7}$( \Lambda_{3,14},  \Lambda_{5,8})$,
\end{enumerate}
then there is no genus one exact Lagrangian cobordism from $\Lambda_{p,q}$ to  $\Lambda_{p',q'}$.

Furthermore, for all pairs $T_{p,q}$ and $T_{p',q'}$ that have the same $g_4$ and that have cobordism distance one by Theorem~\ref{thm:main}, the following holds. There exists a 2-component Legendrian link $L$ and two decomposable Lagrangian cobordisms, one from $\Lambda_{p,q}$ to $L$ and one from $\Lambda_{p',q'}$ to $L$,\footnote{In fact, from the proof it will be clear, that also the following holds. There exists a 2-component Legendrian link $L'$ and two decomposable Lagrangian cobordisms, one from  $L'$ to $\Lambda_{p',q'}$ and one from $L'$ to $\Lambda_{p,q}$.} that glue together to give a genus one cobordism from $T_{p,q}$ and $T_{p',q'}$.
\end{theorem}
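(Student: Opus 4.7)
The plan is to combine three ingredients: (a) Lemma~\ref{lemma:posbraidstodecompcobs}, which upgrades the positive-braid manipulations already used to prove Theorem~\ref{thm:main} into decomposable Lagrangian cobordisms between the corresponding canonical Legendrian representatives; (b) Chantraine's Thurston--Bennequin formula, which says that a genus $g$ exact Lagrangian cobordism from $\Lambda_-$ to $\Lambda_+$ between knots forces $tb(\Lambda_+) - tb(\Lambda_-) = -\chi = 2g$; and (c) Theorem~\ref{thm:main} itself as a smooth obstruction for pairs outside (fam1)--(fam7).

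For the existence claims (lfam1)--(lfam6), I would take the explicit braid sequences built in the proof of Theorem~\ref{thm:main} for each family. Every such sequence is a chain of positive braid words related by cyclic permutations, positive Markov (de)stabilizations, and exactly one generator deletion (matching the Euler-characteristic drop $-2$ of a genus one cobordism between knots: one pinch to a 2-component link, followed by one pinch back). Lemma~\ref{lemma:posbraidstodecompcobs} then converts the chain directly into a decomposable Lagrangian cobordism. The Chantraine constraint dictates the orientation: using $tb(\Lambda_{p,q}) = pq - p - q$, a direct case-check of (fam1)--(fam6) of Theorem~\ref{thm:main} shows that the $tb$-difference equals $+2$ precisely in the directions listed in (lfam1)--(lfam6). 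In particular, among the six pairs in (fam6) only $\{T_{2,11}, T_{4,5}\}$ has nonzero $tb$-difference (and only in the direction $\Lambda_{2,11}\to \Lambda_{4,5}$), which is why (lfam6) contains this single entry.

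For the obstruction direction, suppose $(\Lambda_{p,q}, \Lambda_{p',q'})$ is not in (lfam1)--(lfam7). Either $\{T_{p,q}, T_{p',q'}\}$ is not in (fam1)--(fam7), in which case Theorem~\ref{thm:main} rules out any smooth genus one cobordism, and hence any exact Lagrangian one; or $\{T_{p,q}, T_{p',q'}\}$ does lie in (fam1)--(fam7) but with an ordering for which the $tb$-difference is not $+2$, and Chantraine's formula rules it out.

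For the final assertion, the pairs with equal $g_4$ are exactly the five pairs in (fam6) other than $\{T_{2,11}, T_{4,5}\}$, since in (fam1)--(fam5) the $g_4$'s of the two knots always differ by one (an easy verification from $g_4(T_{p,q}) = (p-1)(q-1)/2$). For each of these five pairs the $tb$-difference is zero, so no genus one Lagrangian cobordism exists in either direction; however, the genus one cobordism produced in the proof of Theorem~\ref{thm:main} is naturally built as two pinches meeting at a 2-component link in the middle. I would split the positive-braid sequence at this middle link into two sub-sequences: starting from $\beta_{p,q}$, a chain of type (i) and (ii) moves followed by a single generator deletion of type (iii) producing a 2-component positive-braid link $L$; and starting from $\beta_{p',q'}$, an analogous sub-sequence reaching the \emph{same} $L$ (using that positive braids with the same closure give Legendrian isotopic links by \cite[Corollary~1.13]{Etnyre-VanHornMorris:2011-1}). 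Lemma~\ref{lemma:posbraidstodecompcobs} then yields two decomposable Lagrangian cobordisms, each from a $\Lambda_{p,q}$ or $\Lambda_{p',q'}$ up to $L$, which glue along $L$ (smoothly, not Lagrangianly, since Lagrangian cobordisms are directed) to the desired genus one cobordism; the footnote's dual statement is obtained by instead splitting at a common 2-component link at the \emph{bottom}. The main obstacle is the concrete bookkeeping: one has to inspect each of the five braid sequences from Theorem~\ref{thm:main} and verify that the common middle link $L$ can indeed be arranged by cyclic permutation and Markov moves; this is a figure-by-figure check rather than a new conceptual step.
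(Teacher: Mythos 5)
Your proposal follows essentially the same route as the paper's proof: Theorem~\ref{thm:main} together with Chantraine's relation $\mathit{tb}(\Lambda_+)-\mathit{tb}(\Lambda_-)=-\chi(L)$ and $\mathit{tb}(\Lambda_{p,q})=2g_4(T_{p,q})-1$ for the obstruction and direction constraints, Lemma~\ref{lemma:posbraidstodecompcobs} applied to the braid constructions of Section~\ref{sec:proofThm1.2} for existence, and a splitting at the intermediate two-component link for the equal-genus pairs. Two small slips to correct: the cobordisms for families \eqref{fam1}--\eqref{fam5} and $\{T_{2,11},T_{4,5}\}$ are built from \emph{two} generator deletions (two pinches), not one; and move (iii) of Lemma~\ref{lemma:posbraidstodecompcobs} is generator \emph{addition}, producing a decomposable cobordism from $\Lambda_{\beta}$ to $\Lambda_{\beta a}$, so the link $L$ admitting cobordisms \emph{from} both $\Lambda_{p,q}$ and $\Lambda_{p',q'}$ is the one obtained by adding a generator to each braid, while the deletion-intermediate link yields the footnote's dual statement.
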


\begin{remark}[Different interpretations of the constructions via positive braids]\label{rem:plumbings}
The cobordisms between torus knots from~\cite{Baader_ScissorEq,Feller_15_MinCobBetweenTorusknots,BaaderFellerLewarkZentner_16} (compare with the second paragraph above) are constructed using (i), (ii), (iii) from Lemma~\ref{lemma:posbraidstodecompcobs}. Consequently, these cobordisms are in fact (isotopic to) decomposable Lagrangian cobordisms.

Cobordisms constructed using (i), (ii), (iii) from Lemma~\ref{lemma:posbraidstodecompcobs} can be understood in terms of the corresponding fiber surfaces in $S^3$ as follows (see also~\cite[Remark~19]{Feller_15_MinCobBetweenTorusknots} for more details). Let $\beta$ and $\beta'$ be positive braids as in Lemma~\ref{lemma:posbraidstodecompcobs} and additionally assume that they have non-split closures (e.g.~the closures are knots). The fiber surface of the closure of $\beta'$ is obtained by positive Hopf plumbing of the fiber surface of the closure of $\beta$. Consequently, the cobordisms constructed in this paper correspond to plumbing (and deplumbing) of positive Hopf bands.
\end{remark}

As a first step towards a negative answer to Question~\ref{Q:genus1cob}, we wonder whether one can use contact invariants to provide a negative answer to the following question.
\begin{question}\label{Q:genus1deccob} Does there exist a decomposable Lagrangian cobordism from $\Lambda_{3,14}$ to~$\Lambda_{5,8}$?
\end{question}
In light of Remark~\ref{rem:plumbings}, Theorem~\ref{thm:main} and its proof determine for which pairs of torus knots their corresponding fiber surfaces are related by plumbing or deplumbing two positive Hopf bands, with one exception. We wonder whether there are obstructions that would answer the following question in the negative. Can the fiber surface of $T_{5,8}$ be obtained by plumbing two positive Hopf bands to the fiber surface of $T_{3,14}$?

\subsection*{Acknowledgments}
This project started when both authors were at the Max-Planck-Institute for Mathematics in Bonn. The project took its final form when the first author visited the second author at Georgia Tech. We would like to thank both the MPIM and Georgia Tech for providing an excellent environment for research. We thank John Etnyre for his interest in the project and specifically suggesting the consideration of decomposable cobordisms in general and Lemma~\ref{lemma:posbraidstodecompcobs} in particular. We thank Lukas Lewark for being inquisitive about bounds cobordism distance bounds from Tristram-Levine signatures. We also thank Marco Golla, Jennifer Hom, Allison Miller, Hyun Ki Min, and Lee Rudolph for helpful conversations and remarks. {We are also grateful to the anonymous referee for their detailed and thoughtful suggestions.}
\section{Proof of Theorem~\ref{thm:main} Part 1: finding cobordisms}\label{sec:proofThm1.2}
In this article, we work in the smooth category, and all manifolds are oriented. To find the genus one cobordism establishing the first part of Theorem~\ref{thm:main}, we use previous constructions together with three new explicitly constructed cobordisms that realize the cobordism distance. We restate the first part of Theorem~\ref{thm:main}, about the existence of genus one cobordisms, in a separate proposition.
\begin{proposition}\label{prop:existenceofcobs}
Let $\{T_{p,q}, T_{p',q'} \}$ be any pair of non-trivial positive torus knots. If the pair is one of those appearing in \eqref{fam1}--\eqref{fam6} in Theorem~\ref{thm:main},
then there exists a genus one cobordism between them $(i.e.~d(T_{p,q}, T_{p',q'})=1)$.
\end{proposition}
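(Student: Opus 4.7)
The plan is to invoke Lemma~\ref{lemma:posbraidstodecompcobs} as the central tool: for each pair $\{T_{p,q}, T_{p',q'}\}$ in families \eqref{fam1}--\eqref{fam6}, I would exhibit positive braid words whose closures are the two torus knots and connect them via a common intermediate (typically a 2-component positive braid closure $L$) using two sequences of elementary moves---cyclic permutation, positive Markov (de)stabilization, and insertion of a single positive Artin generator. Applying the lemma in the two directions yields two decomposable Lagrangian cobordisms, one from each of $\Lambda_{\beta_{p,q}}, \Lambda_{\beta_{p',q'}}$ to $L$, which---after reversing one and concatenating---combine into a smooth cobordism of Euler characteristic $-2$, that is, a genus one cobordism between the two torus knots.

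For families \eqref{fam1} and \eqref{fam2}, the construction is essentially classical and already recorded in earlier work. Specifically, $T_{2,2n+1}$ and $T_{2,2n+3}$ are related by a single crossing change on the braid $a_1^{2n+1}\in B_2$, and $T_{3,3n+1}$ and $T_{3,3n+2}$ by a well-known manipulation in $B_3$; both appear in~\cite{Feller_15_MinCobBetweenTorusknots, BaaderFellerLewarkZentner_16}. Families \eqref{fam3} and \eqref{fam4} are handled by the constructions in~\cite{Feller_15_MinCobBetweenTorusknots}, which provide uniform positive braid manipulations realizing genus one cobordisms between infinite families of positive torus knot pairs of equal 4-ball genus. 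The small cases in family~\eqref{fam5} ($n\in\{4,5\}$) and a number of the pairs in family~\eqref{fam6} are also covered by, or readily adapted from, the literature.

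The principal new work---corresponding to the ``three new explicitly constructed cobordisms'' mentioned at the beginning of Section~\ref{sec:proofThm1.2}---is to handle the remaining pairs, which I expect to be the larger cases of family~\eqref{fam5} (namely $n\in\{7,8\}$) and the most delicate pairs in family~\eqref{fam6}, for instance those involving $T_{4,5}$ or $T_{4,7}$. For each such pair, the strategy is uniform: choose positive braid representatives of both torus knots in a common braid group (after suitable Markov stabilizations), and exhibit diagrammatically a sequence of moves (i)--(iii) from Lemma~\ref{lemma:posbraidstodecompcobs} reducing both braids to a common intermediate 2-component positive braid closure. I expect the main obstacle to be the combinatorial one of finding the right intermediate braids---there is no general recipe, and each sporadic pair must be handled essentially ad hoc---but the existence of such cobordisms in each case is predicted by Proposition~\ref{prop:nu}, which guarantees that the natural obstruction coming from $\nu^+$ vanishes.
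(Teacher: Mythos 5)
Your overall strategy matches the paper's at a high level: dispose of families \eqref{fam1}, \eqref{fam2}, \eqref{fam5} and part of \eqref{fam6} by citing known results, handle the infinite families \eqref{fam3} and \eqref{fam4} by a uniform positive-braid construction from the literature, and build the remaining cobordisms by hand via the moves of Lemma~\ref{lemma:posbraidstodecompcobs}. Two factual corrections, though. First, the paper handles \eqref{fam3} and \eqref{fam4} with Baader's genus-$\frac{|b-a|(c-1)}{2}$ cobordism between $T_{a,bc}$ and $T_{b,ac}$ from \cite[Proposition~1]{Baader_ScissorEq}, taking $(a,b,c)=(3n+1,3n+2,3)$ and $(2n+1,2n+3,2)$; these pairs do \emph{not} have equal $4$-ball genus (their genera differ by one), so the cobordism is obtained by deleting two positive generators from one braid, not by joining two braids at a common intermediate link. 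Second, the cases $n\in\{7,8\}$ of \eqref{fam5} are already covered by the $p+p'\le 6$ classification of \cite[Corollary~3]{Feller_15_MinCobBetweenTorusknots}; the three pairs requiring genuinely new constructions are $\{T_{3,7},T_{4,5}\}$, $\{T_{3,10},T_{4,7}\}$, and $\{T_{4,9},T_{5,7}\}$.

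The genuine gap is that for these three pairs you never produce the braid words: you say that finding the right intermediate braids is the main combinatorial obstacle, that there is no general recipe, and that each pair must be handled ad hoc --- and then you stop. That combinatorial work \emph{is} the content of the proposition for these pairs; the paper spends several pages of explicit braid identities (for instance, realizing $T_{5,7}$ as a stabilized $8$-braid, isolating a sub-word $\Diamond$ with a specific commutation property, deleting one generator and adding one, and then verifying via cyclic permutation and braid relations that the result closes up to $(a_1a_2a_3)^9=T_{4,9}$) precisely to carry it out. Your closing appeal to Proposition~\ref{prop:nu} as ``predicting'' existence is not a valid substitute: $\nu^+$ only obstructs, and its failure to obstruct does not produce a cobordism. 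Indeed the pair $\{T_{3,14},T_{5,8}\}$ in family \eqref{fam7} passes the $\nu^+$ test yet no genus one cobordism is known --- that is exactly Question~\ref{Q:genus1cob} --- so the heuristic you lean on fails inside this very paper. As written, the proposal is a correct plan with the decisive step missing.
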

All cobordisms constructed below will be given as a composition of $1$-handles (saddles) corresponding to a saddle move on knots and links. Here a \emph{saddle move} is understood to be the operation of changing a link in a $3$--ball as diagrammatically described on the left-hand side in Figure~\ref{fig:saddlemove}.
\begin{figure}[h]
\centering
\def\svgscale{1.9}
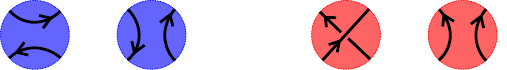
\caption{Left: A saddle move. Right: A smoothing of a crossing, which is obtained by applying a saddle move in the ball that is a small neighborhood of the obvious spanning disk for the blue circle pictured.}
\label{fig:saddlemove}
\end{figure}
In fact, all saddles moves will be given as \emph{smoothing a crossing} (respectively its inverse: adding a crossing) in a diagram of a link, which is another diagrammatic representation of a saddle move; see right-hand side in Figure~\ref{fig:saddlemove}.
And in turn, all our smoothing and adding of crossings, will correspond to deleting or adding a generator in a positive braid word. 

\begin{proof}[Proof of Proposition~\ref{prop:existenceofcobs}]
For torus knots $T_{p,q}$ and $T_{p',q'}$ with $p+p'\leq 6$, the cobordism distance has been determined; see~\cite[Corollary~3]{Feller_15_MinCobBetweenTorusknots}. In particular, the pairs of knots in the families
\eqref{fam1}, \eqref{fam2}, and \eqref{fam5} have cobordism distance one.
Furthermore, this also shows that half of the pairs of \eqref{fam6} have cobordism distance one; namely, $\{ T_{2,7}, T_{3,4}\}$, $\{ T_{2,9}, T_{3,5}\}$, and $\{ T_{2,11}, T_{4,5}\}$.
However, we note that these cobordisms were most likely all previously known to experts. For example, for all pairs of torus knots where both knots correspond to simple singularities of plane curves, the cobordism distance is (implicitly) determined by Arnold in his classification of adjacency of simple singularities~\cite{Arnold_normalforms}. For the families \eqref{fam1} and \eqref{fam2}, it is even known that these pairs of knots have Gordian distance one (that is, one can be turned into the other by one crossing change); compare with the proof of Corollary~\ref{cor:3} below.

To establish cobordism distance one for the families \eqref{fam3} and \eqref{fam4}, we recall the following result of Baader~\cite[Proposition~1]{Baader_ScissorEq}. Let $a,b,c$ be pairwise coprime positive integers. Then there exists a cobordism of genus $\frac{|b-a|(c-1)}{2}$ between the torus knots $T_{a,bc}$ and $T_{b,ac}$. Setting $a=3n+1$, $b=3n+2$, and $c=3$ for the family \eqref{fam3} and $a=2n+1$, $b=2n+3$, and $c=2$ for the family \eqref{fam4}, yields the genus one cobordisms. We make a remark on Baader's construction of these cobordisms, which will only become relevant in the proof of Corollary~\ref{cor:3}.

\begin{remark}\label{rem:abc}
 In fact (for this we assume w.l.o.g. that $a\leq b$), inspecting Baader's proof shows that he writes $T_{b,ac}$ as the closure of a positive braid given by a specific positive braid word from which one can delete $(b-a)(c-1)$ positive generators to obtain a positive braid with closure $T_{a,bc}$. This corresponds to smoothing $(b-a)(c-1)$ crossings in a standard diagram of the closure of the braid, and, thus, gives a genus $\frac{(b-a)(c-1)}{2}$ cobordism given as the concatenation of $(b-a)(c-1)$ 1-handles.

Similarly, all the above cobordisms that are coming from~\cite[Corollary~3]{Feller_15_MinCobBetweenTorusknots} are obtained as follows. The larger genus torus knot is realized as the closure of a positive braid from which two positive generators are deleted to obtain a positive braid with closure the other torus knot.
\end{remark}

It remains to discuss $\{ T_{3,7}, T_{4,5}\}$, $\{ T_{3,10}, T_{4,7}\}$, and $\{ T_{4,9}, T_{5,7}\}$ from \eqref{fam6}.

\noindent\textbf{Construction of a genus one cobordism between $T_{4,9}$ and $T_{5,7}$:}
We view $T_{5,7}$ as the closure of the 8-braid \[\beta\coloneqq(a_1a_2a_3a_4a_5a_6a_7)(a_1a_2a_3a_4a_5a_6)^{4},\]
which is obtained from the standard $7$-braid $(a_1a_2a_3a_4a_5a_6)^{5}$ with closure $T_{5,7}$ by one positive Markov stabilization.

Denote by $\Diamond$ the $8$-braid \[\Diamond\coloneqq a_4a_5a_6a_7a_3a_4a_5a_6a_2a_3a_4a_5a_1a_2a_3a_4,\] with the following commutation property:
\begin{equation}\label{eq:diamond}
\Diamond a_i=a_{i-4} \Diamond\et\Diamond a_{i-4}=a_{i}\Diamond\quad\forall i\in\{5,6,7\}.
\end{equation}
Applying braid relations one finds the following equalities of braids in the $8$-stranded braid group (see Left of Figure~\ref{fig:beta}):
\begin{align*}
  \beta=&a_1a_2a_3a_1a_2a_1\Diamond
  a_6a_5a_6(a_1a_2a_3a_4a_5a_6)=a_4a_1a_2a_3a_1a_2a_1\Diamond
  a_6a_5a_6(a_1a_2a_3a_4a_5).\end{align*}
By applying a cyclic permutation (a special case of conjugation, which preserves the closure; see Middle of Figure~\ref{fig:beta}) to the above positive word representing $\beta$ yields
\begin{align*}
  \beta'\coloneqq&a_1a_2a_3a_1a_2a_1\Diamond
  a_6a_5a_6(a_1a_2a_3a_4a_5)a_4=a_1a_2a_3a_1a_2a_1\Diamond a_6
  a_6a_5a_6(a_1a_2a_3\red{a_4}a_5),\end{align*}
where the equalities are again obtained by braid relations (see Right of Figure~\ref{fig:beta}).
\begin{figure}[h]
\centering

\begin{asy}
pen r = gray(0.7);
filldraw(myt * shift(6.5, 14 + 1.5) * inverse(myt) * scale(0.85) * (((N..(0.4*(N+E))..E) & (E..(0.4*(E+S))..S) & (S..(0.4*(S+W))..W) & (W..(0.4*(W+N))..N))--cycle), r, white);
pen g = rgb(0.2, 1, 0.2);
filldraw(myt * shift(4.5, 8.5) * inverse(myt) * scale(3.5,5.5) * (((N..(0.5*(N+E))..E) & (E..(0.5*(E+S))..S) & (S..(0.5*(S+W))..W) & (W..(0.5*(W+N))..N))--cycle), g, white);
drawbraid("abcdefgabcdefabcdefabcdefabcdef");
\end{asy}
\quad\raisebox{3.2cm}{$=$}
\begin{asy}
pen r = gray(0.7);
filldraw(myt * shift(4.5, 1 + 1.5) * inverse(myt) * scale(0.85) * (((N..(0.4*(N+E))..E) & (E..(0.4*(E+S))..S) & (S..(0.4*(S+W))..W) & (W..(0.4*(W+N))..N))--cycle), r, white);
pen g = rgb(0.2, 1, 0.2);
filldraw(myt * shift(4.5, 8.5) * inverse(myt) * scale(3.5,5.5) * (((N..(0.5*(N+E))..E) & (E..(0.5*(E+S))..S) & (S..(0.5*(S+W))..W) & (W..(0.5*(W+N))..N))--cycle), g, white);
drawbraid("dabcabadefgcdefbcdeabcdfefabcde");
\end{asy}
\quad\raisebox{3.2cm}{$\overset{\text{cyc.~per}}{\longleftrightarrow}$}
\begin{asy}
pen r = gray(0.7);
filldraw(myt * shift(4.5, 14 + 1.5) * inverse(myt) * scale(0.85) * (((N..(0.4*(N+E))..E) & (E..(0.4*(E+S))..S) & (S..(0.4*(S+W))..W) & (W..(0.4*(W+N))..N))--cycle), r, white);
pen g = rgb(0.2, 1, 0.2);
filldraw(myt * shift(4.5, 8.5) * inverse(myt) * scale(3.5,5.5) * (((N..(0.5*(N+E))..E) & (E..(0.5*(E+S))..S) & (S..(0.5*(S+W))..W) & (W..(0.5*(W+N))..N))--cycle), g, white);
drawbraid("abcabadefgcdefbcdeabcdfefabcded");
\end{asy}
\quad\raisebox{3.2cm}{$=$}
\begin{asy}
pen r = gray(0.7);
pen q = rgb(0.9, 0.1, 0.1);
filldraw(myt * shift(6.5, 11 + 1.5) * inverse(myt) * scale(0.85) * (((N..(0.4*(N+E))..E) & (E..(0.4*(E+S))..S) & (S..(0.4*(S+W))..W) & (W..(0.4*(W+N))..N))--cycle), r, white);
filldraw(myt * shift(4.5, 13 + 1.5) * inverse(myt) * scale(0.8) * unitcircle, q, white);
filldraw(myt * shift(7.5, 12 + 1.5) * inverse(myt) * scale(0.8) * unitcircle, q, white);
pen g = rgb(0.2, 1, 0.2);
filldraw(myt * shift(4.5, 8.5) * inverse(myt) * scale(3.5,5.5) * (((N..(0.5*(N+E))..E) & (E..(0.5*(E+S))..S) & (S..(0.5*(S+W))..W) & (W..(0.5*(W+N))..N))--cycle), g, white);
drawbraid("abcabadefgcdefbcdeabcdffefabcde");
\end{asy}

\captionof{figure}{Left: Braid isotopy of $\beta$ (first equality: the gray marked crossing is isotoped down).
\\
Middle: Cyclical permutation from $\beta$ to $\beta'$ (the gray marked crossing is removed on the bottom and added to the top).
\\Right: Braid isotopy of $\beta'$ (last equality: the gray marked crossing is isotoped down).
The crossing that is deleted and the place where a crossing is added to find $\beta'$ are marked red.}
\label{fig:beta}
\end{figure}
  Next, we observe that by deleting the last occurrence of $a_4$ (red above, and red-marked crossing in Figure~\ref{fig:beta}) and adding one $a_7$ (red below), we can turn $\beta'$ into
\[\beta''\coloneqq  a_1a_2a_3a_1a_2a_1\Diamond a_6
  a_6\red{a_7}a_5a_6(a_1a_2a_3a_5).\]
See Left of Figure~\ref{fig:beta''} for a depiction of $\beta''$.
\begin{figure}[h]
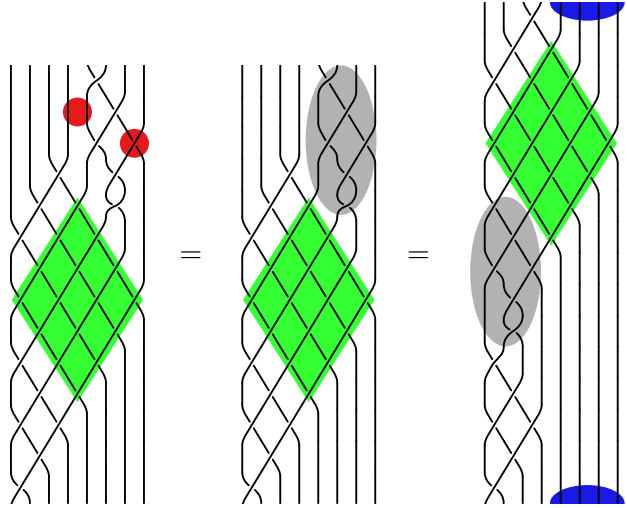

\centering

\begin{asy}
pen q = rgb(0.9, 0.1, 0.1);
filldraw(myt * shift(4.5, 13 + 1.5) * inverse(myt) * scale(0.8) * unitcircle, q, white);
filldraw(myt * shift(7.5, 12 + 1.5) * inverse(myt) * scale(0.8) * unitcircle, q, white);
pen g = rgb(0.2, 1, 0.2);
filldraw(myt * shift(4.5, 8.5) * inverse(myt) * scale(3.5,5.5) * (((N..(0.5*(N+E))..E) & (E..(0.5*(E+S))..S) & (S..(0.5*(S+W))..W) & (W..(0.5*(W+N))..N))--cycle), g, white);
drawbraid("abcabadefgcdefbcdeabcdffgefabce");
\end{asy}
\quad\raisebox{3.2cm}{$=$}
\begin{asy}
pen r = gray(0.7);
filldraw(myt * shift(6.2, 13.6) * inverse(myt) * scale(1.9,4) * unitcircle, r, white);
pen g = rgb(0.2, 1, 0.2);
filldraw(myt * shift(4.5, 8.5) * inverse(myt) * scale(3.5,5.5) * (((N..(0.5*(N+E))..E) & (E..(0.5*(E+S))..S) & (S..(0.5*(S+W))..W) & (W..(0.5*(W+N))..N))--cycle), g, white);
drawbraid("abcabadefgcdefbcdeabcdffgefabce");
\end{asy}
\quad\raisebox{3.2cm}{$=$}
\begin{asy}
pen r = gray(0.7);
pen q = rgb(0.9, 0.1, 0.1);
pen p = rgb(0.1, 0.1, 0.9);
filldraw(myt * shift(2.1, 9.4) * inverse(myt) * scale(1.9,4) * unitcircle, r, white);
filldraw(myt * shift(6.4, 2) * inverse(myt) * scale(2,1) * unitcircle, p, white);
filldraw(myt * shift(6.4, 18) * inverse(myt) * scale(2,1) * unitcircle, p, white);
pen g = rgb(0.2, 1, 0.2);
filldraw(myt * shift(4.5, 13.5) * inverse(myt) * scale(3.5,5.5) * (((N..(0.5*(N+E))..E) & (E..(0.5*(E+S))..S) & (S..(0.5*(S+W))..W) & (W..(0.5*(W+N))..N))--cycle), g, white);
drawbraid("abcababbacbadefgcdefbcdeabcdabc");
\end{asy}
\captionof{figure}{Braid isotopies of $\beta''$: the gray marked crossings get isotoped down through $\Diamond$ (green) according to~\eqref{eq:diamond}.}
\label{fig:beta''}
\end{figure}

Deleting and adding a braid generator changes the closure by smoothing a crossing and adding a crossing, respectively. Recalling that each of these correspond to a 1-handle, there is a genus one cobordism build out of two $1$-handles between the knots $T_{5,7}$ and the knot $K$ given as the closure of $\beta''$. To conclude $d(T_{4,9}, T_{5,7})=1$, we now show that $K=T_{4,9}$:

We rewrite the $8$-braid
\begin{align*}\beta''=&(a_1a_2a_3a_1a_2a_1)\Diamond (a_6
  a_6a_7a_5a_6a_5)(a_1a_2a_3)\\
\overset{\eqref{eq:diamond}}{=}&(a_1a_2a_3a_1a_2a_1)(a_2
  a_2a_3a_1a_2a_1)\Diamond (a_1a_2a_3)\end{align*}
and observe (see Figures~\ref{fig:beta''} and~\ref{fig:beta'''}) that it has the same closure as the $4$-braid $\beta'''$ given by the braid word obtained from the above by replacing $\Diamond$ by the so-called full-twist $\Delta^2=(a_1a_2a_3)^4$:
\[\beta'''\coloneqq(a_1a_2a_3a_1a_2a_1)(a_2
  a_2a_3a_1a_2a_1)\Delta^2 (a_1a_2a_3).\]
\begin{figure}[h]
\centering
\begin{asy}
pen r = gray(0.7);
filldraw(myt * shift(2.5, 6 + 1.5) * inverse(myt) * scale(0.85) * (((N..(0.4*(N+E))..E) & (E..(0.4*(E+S))..S) & (S..(0.4*(S+W))..W) & (W..(0.4*(W+N))..N))--cycle), r, white);
drawbraid("abcababbacbaabcabcabcabcabc");
\end{asy}
\quad\raisebox{3.7cm}{$=$}
\begin{asy}
pen r = gray(0.7);
filldraw(myt * shift(2.5, 1 + 1.5) * inverse(myt) * scale(0.85) * (((N..(0.4*(N+E))..E) & (E..(0.4*(E+S))..S) & (S..(0.4*(S+W))..W) & (W..(0.4*(W+N))..N))--cycle), r, white);
drawbraid("babcababacbaabcabcabcabcabc");
\end{asy}
\quad\raisebox{3.7cm}{$\overset{\text{cyc.~per.}}{\longleftrightarrow}$}
\begin{asy}
pen r = gray(0.7);
filldraw(myt * shift(2.5, 21 + 1.5) * inverse(myt) * scale(0.85) * (((N..(0.4*(N+E))..E) & (E..(0.4*(E+S))..S) & (S..(0.4*(S+W))..W) & (W..(0.4*(W+N))..N))--cycle), r, white);
drawbraid("abcababacbaabcabcabcabcabcb");
\end{asy}
\quad\raisebox{3.7cm}{$=$}
\begin{asy}
pen r = gray(0.7);
filldraw(myt * shift(1.5, 9 + 1.5) * inverse(myt) * scale(0.8) * unitcircle, r, white);
filldraw(myt * shift(3.5, 9 + 1.5) * inverse(myt) * scale(0.85) * (((N..(0.4*(N+E))..E) & (E..(0.4*(E+S))..S) & (S..(0.4*(S+W))..W) & (W..(0.4*(W+N))..N))--cycle), r, white);
drawbraid("abcababacbacabcabcabcabcabc");
\end{asy}
\quad\raisebox{3.7cm}{$=$}
\begin{asy}
pen r = gray(0.7);
filldraw(myt * shift(3.5, 5 + 1.5) * inverse(myt) * scale(0.8) * unitcircle, r, white);
drawbraid("abcabcabacbcabcabcabcabcabc");
\end{asy}
\captionof{figure}{Left: The $4$-braid $\beta'''$ with the same closure as $\beta''$. This is geometrically seen by closing up top and bottom of the 4 right most strands of $\beta''$ (i.e.~glue up the two blue half-ellipses in Figure~\ref{fig:beta''}).\\
Right: Cyclic permutation on $\beta'''$ (the gray marked crossing is removed on the bottom and added to the top) yields a braid which is $(a_1a_2a_3)^9$ (most right) up to braid isotopy (the braid isotopy is for the last two equalities are given by isotoping the gray marked crossing down).}
\label{fig:beta'''}
\end{figure} 
Further applications of braid relations (see most-left equality in Figure~\ref{fig:beta'''}) yield
\[\beta'''=a_2(a_1a_2a_3a_1a_2a_1)(a_2a_3a_1a_2a_1)\Delta^2 (a_1a_2a_3),\] and a cyclic permutation (see left-right arrow in Figure~\ref{fig:beta'''}) gives
\begin{align*}(a_1a_2a_3a_1a_2a_1)(a_2a_3a_1a_2a_1)\Delta^2 (a_1a_2a_3)a_2=&(a_1a_2a_3a_1a_2a_1)(a_2a_3a_1a_2a_1)a_3\Delta^2 (a_1a_2a_3)\\
=&(a_1a_2a_3a_1a_2a_1)a_3(a_2a_3a_1a_2)a_3\Delta^2 (a_1a_2a_3)\\
=&(a_1a_2a_3a_1a_2a_3a_1(a_2a_3a_1a_2)a_3\Delta^2 (a_1a_2a_3)\\
=&(a_1a_2a_3)(a_1a_2a_3)(a_1a_2a_3)(a_1a_2a_3)\Delta^2 (a_1a_2a_3)\\
=&(a_1a_2a_3)^9,
\end{align*}
which establishes that $\beta'''$ has closure $T(4,9)$.

\begin{figure}[h]
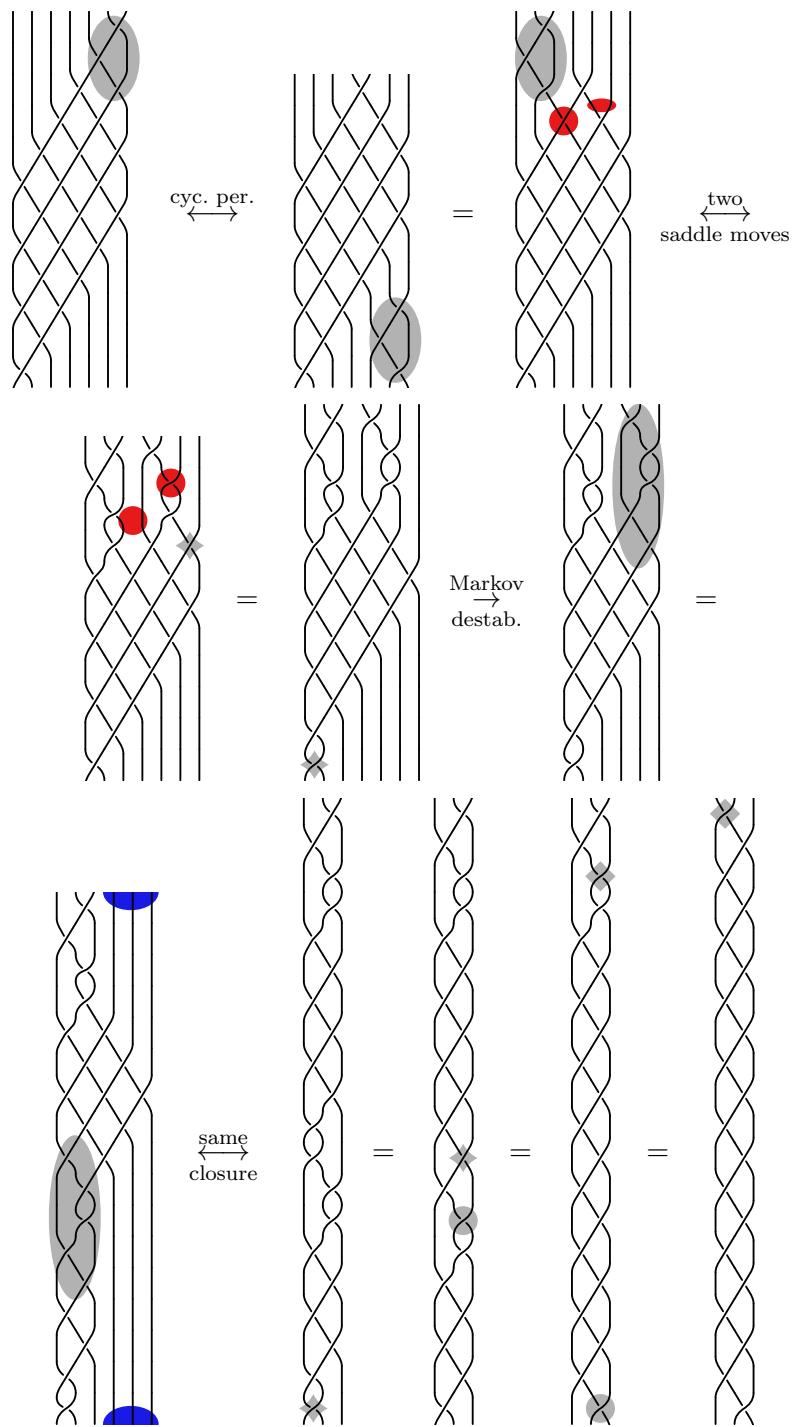

\centering
\begin{asy}
pen r = gray(0.7);
filldraw(myt * shift(6.3, 12.5) * inverse(myt) * scale(1.4,2.3) * unitcircle, r, white);
drawbraid("abcdefabcdefabcdefabcdef");
\end{asy}
\quad\raisebox{2.2cm}{$\overset{\text{cyc.~per.}}{\longleftrightarrow}$}
\begin{asy}
pen r = gray(0.7);
filldraw(myt * shift(6.3, 3.5) * inverse(myt) * scale(1.4,2.3) * unitcircle, r, white);
drawbraid("fefabcdefabcdefabcdeabcd");
\end{asy}
\quad\raisebox{2.2cm}{$=$}
\begin{asy}
pen r = gray(0.7);
pen q = rgb(0.9, 0.1, 0.1);
filldraw(myt * shift(2.3, 12.5) * inverse(myt) * scale(1.4,2.3) * unitcircle, r, white);
filldraw(myt * shift(3.5, 9 + 1.5) * inverse(myt) * scale(0.8) * unitcircle, q, white);
filldraw(myt * shift(5.5, 9.5 + 1.5) * inverse(myt) * scale(0.8,0.4) * unitcircle, q, white);

drawbraid("abcdefabcdefabcdeabcdbab");
\end{asy}
\quad\raisebox{2.2cm}{$\overset{\text{two}}{\underset{\text{saddle moves}}{\longleftrightarrow}}$}

\vspace{0.2cm}

\begin{asy}
pen r = gray(0.7);
pen q = rgb(0.9, 0.1, 0.1);
filldraw(myt * shift(3.5, 8.8 + 1.5) * inverse(myt) * scale(0.8) * unitcircle, q, white);
filldraw(myt * shift(5.5, 10 + 1.5) * inverse(myt) * scale(0.8) * unitcircle, q, white);
filldraw(myt * shift(6.5, 8 + 1.5) * inverse(myt) * scale(0.85) * (((N..(0.4*(N+E))..E) & (E..(0.4*(E+S))..S) & (S..(0.4*(S+W))..W) & (W..(0.4*(W+N))..N))--cycle), r, white);
drawbraid("abcdefabcdefabcdeabedbab");
\end{asy}
\quad\raisebox{2.3cm}{$=$}
\begin{asy}
pen r = gray(0.7);
pen q = rgb(0.9, 0.1, 0.1);
filldraw(myt * shift(1.5, 1 + 1.5) * inverse(myt) * scale(0.85) * (((N..(0.4*(N+E))..E) & (E..(0.4*(E+S))..S) & (S..(0.4*(S+W))..W) & (W..(0.4*(W+N))..N))--cycle), r, white);
drawbraid("aabcdefabcdeabcdeabedbab");
\end{asy}
\quad\raisebox{2.3cm}{$\underset{\text{destab.}}{\overset{\text{Markov}}{\to}}$}
\begin{asy}
pen r = gray(0.7);
pen q = rgb(0.9, 0.1, 0.1);
filldraw(myt * shift(4.9, 11.4) * inverse(myt) * scale(1.4,4.4) * unitcircle, r, white);
drawbraid("aabcdeabcdeabcdeabedbab");
\end{asy}
\quad\raisebox{2.3cm}{$=$}

\vspace{0.2cm}

\begin{asy}
pen r = gray(0.7);
pen q = rgb(0.9, 0.1, 0.1);
filldraw(myt * shift(1.95, 8.6) * inverse(myt) * scale(1.4,4.4) * unitcircle, r, white);
pen p = rgb(0.1, 0.1, 0.9);
filldraw(myt * shift(4.9, 2) * inverse(myt) * scale(1.5,1) * unitcircle, p, white);
filldraw(myt * shift(4.9, 19) * inverse(myt) * scale(1.5,1) * unitcircle, p, white);
drawbraid("aabababbacdebcdabcabbab");
\end{asy}
\quad\raisebox{3.5cm}{$\underset{\text{closure}}{\overset{\text{same}}{\longleftrightarrow}}$}
\begin{asy}
pen r = gray(0.7);
filldraw(myt * shift(1.5, 1 + 1.5) * inverse(myt) * scale(0.85) * (((N..(0.4*(N+E))..E) & (E..(0.4*(E+S))..S) & (S..(0.4*(S+W))..W) & (W..(0.4*(W+N))..N))--cycle), r, white);
drawbraid("aabababbaababababbab");
\end{asy}
\quad\raisebox{3.5cm}{$=$}
\begin{asy}
pen r = gray(0.7);
filldraw(myt * shift(2.5, 9 + 1.5) * inverse(myt) * scale(0.85) * (((N..(0.4*(N+E))..E) & (E..(0.4*(E+S))..S) & (S..(0.4*(S+W))..W) & (W..(0.4*(W+N))..N))--cycle), r, white);
filldraw(myt * shift(2.5, 7 + 1.5) * inverse(myt) * scale(0.8) * unitcircle, r, white);
drawbraid("abababbabababababbab");
\end{asy}
\quad\raisebox{3.5cm}{$=$}
\begin{asy}
pen r = gray(0.7);
filldraw(myt * shift(2.5, 1 + 1.5) * inverse(myt) * scale(0.8) * unitcircle, r, white);
filldraw(myt * shift(2.5, 18 + 1.5) * inverse(myt) * scale(0.85) * (((N..(0.5*(N+E))..E) & (E..(0.5*(E+S))..S) & (S..(0.5*(S+W))..W) & (W..(0.5*(W+N))..N))--cycle), r, white);
drawbraid("bababababababababbab");
\end{asy}
\quad\raisebox{3.5cm}{$=$}
\begin{asy}
pen r = gray(0.7);
filldraw(myt * shift(1.5, 20 + 1.5) * inverse(myt) * scale(0.85) * (((N..(0.5*(N+E))..E) & (E..(0.5*(E+S))..S) & (S..(0.5*(S+W))..W) & (W..(0.5*(W+N))..N))--cycle), r, white);
drawbraid("babababababababababa");
\end{asy}

\captionof{figure}{The standard 7-braid with closure $T_{4,7}$ (top left) is cyclically permuted. Then a generator is added and one is removed, which corresponds to smoothing a crossing and and adding a crossing (top right to middle left, red). The resulting braid can be Markov destabilized (preserves closure) and has the same closure as a 3-braid that has $T_{3,10}$ as its closure.}
\label{fig:47to310}
\end{figure}

\noindent\textbf{Construction of a genus one cobordism between $T_{3,10}$ and $T_{4,7}$:}
Again by using braid relations changing between braids with the same closure and adding one and deleting one generator,
we manage to get from a braid with closure $T_{4,7}$ to a braid with closure $T_{3,10}$. The argument is very similar in style to the construction of the cobordism between $T_{4,9}$ and $T_{5,7}$ above. Rather than providing the relevant braid words, we provide the corresponding diagrammatic proof; see Figure~\ref{fig:47to310}. As before, this shows the existence of a genus one cobordism between $T_{3,10}$ and $T_{4,7}$ given by two 1-handles.

\begin{figure}[h]
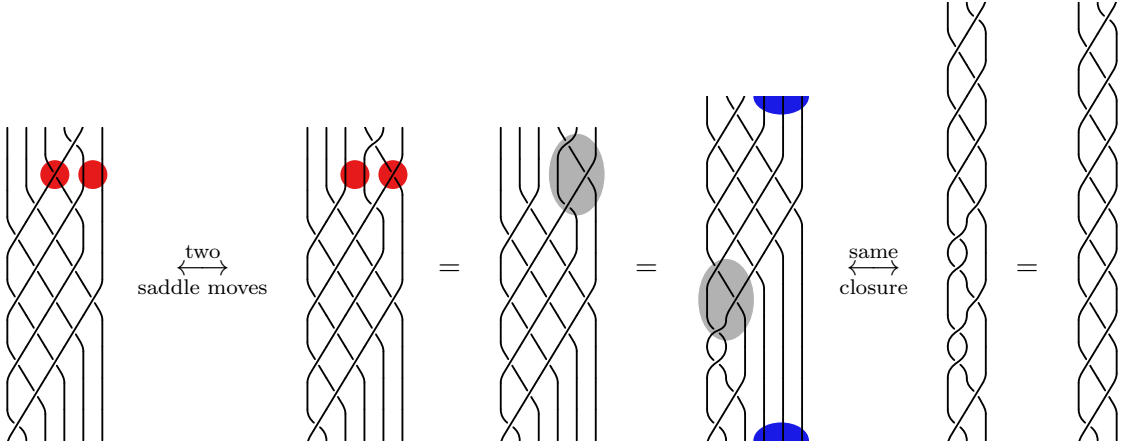

\centering
\begin{asy}
pen q = rgb(0.9, 0.1, 0.1);
filldraw(myt * shift(3.5, 9 + 1.5) * inverse(myt) * scale(0.8) * unitcircle, q, white);
filldraw(myt * shift(5.5, 9 + 1.5) * inverse(myt) * scale(0.8) * unitcircle, q, white);
drawbraid("abcdeabcdabcdabcd");
\end{asy}
\quad\raisebox{2.2cm}{$\overset{\text{two}}{\underset{\text{saddle moves}}{\longleftrightarrow}}$}
\begin{asy}
pen q = rgb(0.9, 0.1, 0.1);
filldraw(myt * shift(3.5, 9 + 1.5) * inverse(myt) * scale(0.8) * unitcircle, q, white);
filldraw(myt * shift(5.5, 9 + 1.5) * inverse(myt) * scale(0.8) * unitcircle, q, white);
drawbraid("abcdeabcdabcdeabd");
\end{asy}
\quad\raisebox{2.2cm}{$=$}
\begin{asy}
pen r = gray(0.7);
filldraw(myt * shift(5, 10.5) * inverse(myt) * scale(1.5,2.2) * unitcircle, r, white);
drawbraid("abcdeabcdabcdeabd");
\end{asy}
\quad\raisebox{2.2cm}{$=$}
\begin{asy}
pen r = gray(0.7);
filldraw(myt * shift(2, 6.5) * inverse(myt) * scale(1.5,2.2) * unitcircle, r, white);
pen p = rgb(0.1, 0.1, 0.9);
filldraw(myt * shift(4.9, 2) * inverse(myt) * scale(1.5,1) * unitcircle, p, white);
filldraw(myt * shift(4.9, 13) * inverse(myt) * scale(1.5,1) * unitcircle, p, white);
drawbraid("abaabacdebcdabcab");
\end{asy}
\quad\raisebox{2.2cm}{$\underset{\text{closure}}{\overset{\text{same}}{\longleftrightarrow}}$}
\begin{asy}
drawbraid("abaabaabababab");
\end{asy}
\quad\raisebox{2.2cm}{$=$}
\begin{asy}
drawbraid("ababababababab");
\end{asy}
\captionof{figure}{A $6$-braid with closure $T_{4,5}$ (left) that can be modified by adding and removing one crossing to yield a braid with closure $T_{3,7}$.}
\label{fig:45to37}
\end{figure}

\noindent\textbf{Construction of a genus one cobordism between $T_{4,5}$ and $T_{3,7}$:}
Once more by using braid relations changing between braids with the same closure and adding one and deleting one generator,
we manage to get from a braid with closure $T_{4,5}$ to a braid with closure $T_{3,7}$; see Figure~\ref{fig:45to37}.
As before, this yields a genus one cobordism from $T_{3,7}$ and $T_{4,5}$ given by two 1-handles.\end{proof}

We end this section with a remark on the cobordisms from Proposition~\ref{prop:existenceofcobs}.

\begin{remark}\label{rmk:cobordisms}From Remark~\ref{rem:abc} and the explicit depiction of the cobordisms in the proof of Proposition~\ref{prop:existenceofcobs}, we conclude the following. All cobordisms constructed in the proof of Proposition~\ref{prop:existenceofcobs} are obtained as a composition of the following moves on positive braids:
\begin{itemize}
\item switching between positive braids with the same closure (an operation which can be achieved by positive Markov stabilization and destabilization of positive braids; see~\cite[Corollary~1.13]{Etnyre-VanHornMorris:2011-1}),
\item positive Markov stabilization or destabilization,
\item cyclic permutation of a positive braid word, and
\item removal or addition of a positive generator to a braid word representing a given positive braid.
\end{itemize}
In fact, the genus one cobordisms for the pairs appearing in (1)--(5) and $\{T_{2,11}, T_{4,5}\}$ are obtained by removing two generators, and the genus one cobordisms between the remaining five pairs of (6) are obtained by removing one generator and adding one.\end{remark}

\section{Proof of Theorem~\ref{thm:main} Part 2: Obstructing cobordisms 
}\label{sec:proofThm1.1}

First, we record some useful properties of the Ozsv{\'a}th-Szab{\'o} $\tau$ invariant \cite{Ozsvath-Szabo:2003-3} and the Hom-Wu $\nu^+$ invariant \cite{Hom-Wu:2016-1}. Recall that the $\tau$ invariant is a group homomorphism from the concordance group to $\mathbb{Z}$ and the $\nu^+$ invariant is non-negative integer valued concordance invariant. Further, the $\tau$ invariant detects the $4$-ball genus of torus knots.

\begin{proposition}[{\cite[Corollary 1.7]{Ozsvath-Szabo:2003-3}}]\label{prop:tau-torus} For any positive torus knot $T_{p,q}$, we have
\[\pushQED{\qed}\tau(T_{p,q}) = g_4(T_{p,q}) = \frac{(p-1)(q-1)}{2}. \qedhere\]
\end{proposition}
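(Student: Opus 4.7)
The plan is to establish the two claimed equalities by sandwiching $g_4(T_{p,q})$ between a lower bound coming from $\tau$ and an upper bound coming from the Seifert genus, thereby recovering a Heegaard-Floer-theoretic proof of the Milnor conjecture for torus knots.

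First I would record the slice-genus bound for $\tau$: for every knot $K\subset S^3$, Ozsv\'ath--Szab\'o's inequality yields $|\tau(K)|\leq g_4(K)$. Applied to $K=T_{p,q}$ this reduces the proposition to showing the two bounds
\[\tau(T_{p,q})\;\geq\;\tfrac{(p-1)(q-1)}{2}\et g_4(T_{p,q})\;\leq\;\tfrac{(p-1)(q-1)}{2},\]
since any knot $K$ satisfies $g_4(K)\leq g_3(K)$ and hence the two bounds sandwich everything.

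The upper bound is classical and easy: the positive braid word $(a_1\cdots a_{p-1})^q$ whose closure is $T_{p,q}$ yields, via Seifert's algorithm (equivalently, the Bennequin surface of the corresponding positive braid), a Seifert surface of genus exactly $\frac{(p-1)(q-1)}{2}$. Pushing this surface into $B^4$ gives $g_4(T_{p,q})\leq\frac{(p-1)(q-1)}{2}$.

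The genuine content, and the step I expect to be the main obstacle, is the computation $\tau(T_{p,q})=\frac{(p-1)(q-1)}{2}$. For this I would appeal to the explicit description of the knot Floer complex $\mathit{CFK}^{\infty}(T_{p,q})$ as a staircase, which follows from the fact that torus knots are L-space knots, so that the Alexander polynomial completely determines the filtered chain homotopy type of $\mathit{CFK}^{\infty}$. Concretely, I would recall that for an L-space knot $K$, the invariant $\tau(K)$ equals the Seifert genus $g_3(K)$ because the staircase has its top vertex at the filtration level $g_3(K)$ and this vertex represents a non-torsion class in $\widehat{HF}$ of $S^3$. Applying this to $T_{p,q}$, together with $g_3(T_{p,q})=\frac{(p-1)(q-1)}{2}$ (degree of the Alexander polynomial, or the Seifert surface above), gives the needed lower bound.

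Combining the three inputs,
\[\tfrac{(p-1)(q-1)}{2}\;=\;\tau(T_{p,q})\;\leq\;g_4(T_{p,q})\;\leq\;\tfrac{(p-1)(q-1)}{2},\]
forces equality throughout and completes the proof. The technical heart is the staircase computation of $\mathit{CFK}^{\infty}(T_{p,q})$; everything else is formal.
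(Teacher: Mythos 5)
Your argument is correct, but note that the paper does not prove this proposition at all: it is imported verbatim as a citation to Ozsv\'ath--Szab\'o's \emph{Knot Floer homology and the four-ball genus} (their Corollary~1.7), so there is no in-paper proof to compare against. What you have written is a sound reconstruction of why the cited result holds. The three inputs you isolate are exactly the right ones: the slice-genus bound $|\tau(K)|\leq g_4(K)$, the Bennequin surface of the positive braid $(a_1\cdots a_{p-1})^q$ giving $g_4(T_{p,q})\leq g_3(T_{p,q})=\tfrac{(p-1)(q-1)}{2}$, and the identity $\tau(K)=g_3(K)$ for L-space knots applied to $T_{p,q}$ (which is an L-space knot since $pq\pm1$ surgery yields a lens space). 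The only caveat worth flagging is historical rather than logical: the L-space-knot/staircase machinery you invoke postdates the cited paper, whose original computation of $\tau(T_{p,q})$ instead runs the large-surgery formula directly against the known Floer homology of the lens spaces obtained by surgery on $T_{p,q}$; your route is the modern streamlining of that same idea, and there is no circularity, since the staircase theorem is proved independently of the four-ball genus of torus knots. So the proposal is a valid self-contained proof of a statement the paper merely quotes.
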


\noindent Further, there is an inequality between $\tau$ and $\nu^+$.

\begin{proposition}[{\cite[Proposition 2.3 and Proposition 2.4]{Hom-Wu:2016-1}}]\label{prop:tau-nu-inequal} For any knot $K$,
\[\pushQED{\qed} \tau(K) \leq  \nu^+(K) \leq g_4(K). \qedhere\]
\end{proposition}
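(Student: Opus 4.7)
The plan is to read the inequality off the construction of $\nu^+$, which is engineered as a refinement sitting between $\tau$ and $g_4$. I would first recall the set-up: the knot Floer complex $CFK^\infty(K)$ carries a $\Z\oplus\Z$ filtration, and for each integer $s$ one has the subquotient $A_s^+(K)\subset CF^+(S^3)$ together with the inclusion-induced map $v_s^+\colon A_s^+(K)\to CF^+(S^3)$ sitting in the standard large-surgery picture. Both invariants are characterized by minimality properties of these maps: $\tau(K)$ is the minimum $s$ such that the induced map is a quasi-isomorphism onto the $U$-tower $\mathcal{T}^+\cong HF^+(S^3)$ modulo torsion at a specified homological grading, while $\nu^+(K)$ is defined by Hom--Wu as the minimum $s\geq 0$ such that $v_s^+$ is surjective on the $U$-nontorsion part (equivalently, the minimum $s\geq 0$ such that $H_*(v_s^+)$ sends the relevant tower generator to the tower generator).

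For the lower bound $\tau(K)\leq \nu^+(K)$, I would argue directly from these characterizations. The condition defining $\nu^+$ is a strengthening of the one defining $\tau$ (it requires both the tower-to-tower surjection and the nonnegativity $s\geq 0$), so any $s$ realizing $\nu^+$ automatically realizes $\tau$; thus $\tau(K)\leq \nu^+(K)$. One should check the sign conventions in the case $\tau(K)<0$, where nonnegativity of $\nu^+$ gives the inequality for free.

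For the upper bound $\nu^+(K)\leq g_4(K)$, I would use a smooth genus-$g$ surface $\Sigma\subset B^4$ with $\partial\Sigma=K$ to build a cobordism from $S^3_N(K)$ to $S^3_N(U)$ for sufficiently large integer surgery coefficient $N$, as in the proof that $\tau(K)\leq g_4(K)$. Then, via the large-surgery identification of $A_s^+(K)$ with a summand of $HF^+$ of large positive surgery on $K$, the cobordism map sends the tower generator to the tower generator provided $s$ is at least $g$ and at least $0$. This exactly shows that $v_s^+$ satisfies the defining surjectivity for all $s\geq g$, and hence $\nu^+(K)\leq g=g_4(K)$.

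The main obstacle I expect is keeping the conventions straight: the precise normalization of gradings on $A_s^+$ versus $CF^+$, the adjunction-type inequality controlling which spin$^c$-structures contribute to the cobordism map, and the bookkeeping that turns the surface $\Sigma$ into a chain-level witness for the surjectivity of $v_g^+$. Everything else is a formal unwinding. Since both needed facts are part of Hom--Wu's foundational setup, in practice I would cite them directly, as the authors do here.
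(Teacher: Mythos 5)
The paper offers no proof of this proposition beyond the citation to Hom--Wu's Propositions 2.3 and 2.4, and your proposal ends by deferring to exactly that citation, so the approaches agree. Your sketch of the underlying arguments is the standard one, though note that your stated characterization of $\tau$ as the minimal $s$ for which $v_s^+$ carries tower to tower is essentially the definition of $\nu^+$ itself rather than of $\tau$; the real content of Hom--Wu's Proposition 2.3 is to compare that surgery-formula condition with the filtration-theoretic definition of $\tau$, so the inequality $\tau(K)\leq\nu^+(K)$ is not quite the formal containment of conditions you describe, while your cobordism argument for $\nu^+(K)\leq g_4(K)$ is the correct standard route.
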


Every positive torus knot $T_{p,q}$ has an associated \emph{semigroup} $\Gamma_{p,q}:= \langle p,q\rangle\subset\N$ (the subsemigroup of $\N$ generated by $p$ and $q$). The $n$th element of $\Gamma_{p,q}$ is denoted by $\Gamma_{p,q}(n)$, and $\displaystyle\max\{\Gamma_{p,q}(n) - \Gamma_{p',q'}(n) \mid n\geq 1\}$ is denoted by $\Gamma_{p,q:p',q'}$. We note that $\Gamma_{p,q:p',q'}$ is always finite since $\Gamma_{p,q}$ contains all natural numbers greater than $pq-p-q$~\cite{Sylvester:1882-1}. The $\nu^+$ invariant of the difference of two positive torus knots can be computed as follows.

\begin{proposition}[{\cite[Theorem 1.1]{Bodnar-Celoria-Golla:2017-1}}]\label{prop:BCG}  {For any positive torus knots  $T_{p,q}$ and $T_{p',q'}$,} we have
\[\pushQED{\qed}\nu^+(T_{p,q} \# -T_{p',q'}) = \max\{\tau(T_{p,q} \# -T_{p',q'}) + \Gamma_{p',q':p,q},0\}.\qedhere\]
\end{proposition}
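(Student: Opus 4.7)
My plan is to compute $\nu^+(T_{p,q}\#-T_{p',q'})$ directly from the knot Floer complex, exploiting the L-space structure of torus knots. First, I would recall the standard characterization of $\nu^+$ as the smallest non-negative integer $s$ such that the quotient map $v_s^+ \colon A_s^+(K) \to CF^+(S^3)$ induces a surjection of $U$-towers, where $A_s^+(K)$ is built from $CFK^\infty(K)$ by collapsing the subcomplex supported in $\{\max(i,j-s) < 0\}$. Equivalently, $\nu^+(K)$ is the smallest $s\ge 0$ for which there exists a cycle representative of the $U$-tower generator lying in $\{\max(i,j-s)\le 0\}$.

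Second, I would invoke the Ozsv\'ath--Szab\'o description of $CFK^\infty$ for L-space knots: since positive torus knots are L-space knots, $CFK^\infty(T_{p,q})$ is a \emph{staircase complex} whose corner generators sit at positions determined by the semigroup $\Gamma_{p,q}$. Concretely, after suitable indexing, one obtains corner generators indexed by $n\ge 0$ at filtration level $g(T_{p,q}) - \Gamma_{p,q}(n)$ in one coordinate and $-\Gamma_{p,q}(n) + \text{(something symmetric)}$ in the other. Then $CFK^\infty(T_{p,q}\#-T_{p',q'})$ is the tensor product of the staircase of $T_{p,q}$ with the dual staircase of $T_{p',q'}$, and its generators are indexed by pairs $(n,n')$ of indices from the two semigroups.

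Third, I would analyze when a cycle lying entirely in $\{\max(i,j-s)\le 0\}$ can represent the $U$-tower generator in homology. Using the staircase form, the obstruction to lowering $s$ is controlled by the generator $(n,n')$ that maximizes the quantity $\Gamma_{p',q'}(n') - \Gamma_{p,q}(n)$ subject to the combinatorial constraint imposed by the staircases; the matching between the two staircases forces $n=n'$, so the relevant optimum is precisely $\Gamma_{p',q':p,q}$. After shifting by the difference $\tau(T_{p,q})-\tau(T_{p',q'}) = \tau(T_{p,q}\#-T_{p',q'})$, which sets the overall grading offset between the two complexes, one obtains the stated formula (truncated at $0$ because $\nu^+\ge 0$ always).

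The main obstacle is the third step: carefully verifying that the optimization in the tensor-product complex really collapses to the diagonal $n=n'$ and is pinned down by the single quantity $\Gamma_{p',q':p,q}$. This requires a bookkeeping argument on the staircase, comparing vertical versus horizontal differentials across the two factors, and checking that no generator off the diagonal improves the bound. Once this combinatorial lemma is in place, matching the result to $\max\{\tau+\Gamma_{p',q':p,q},0\}$ is immediate from the definitions of $\tau$ on torus knots and the non-negativity of $\nu^+$.
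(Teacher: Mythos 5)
First, note that the paper does not prove this proposition at all: it is imported verbatim from Bodn\'ar--Celoria--Golla \cite[Theorem 1.1]{Bodnar-Celoria-Golla:2017-1} (hence the \texttt{\string\qed} built into the statement). So there is no in-paper proof to compare against; your proposal is an attempt to reprove the cited result, and its overall strategy --- the characterization of $\nu^+$ via the region $\{\max(i,j-s)\le 0\}$, the staircase model of $CFK^\infty$ for L-space knots governed by the semigroup, and the analysis of the tensor product of a staircase with a dual staircase --- is indeed the route taken in the source.

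However, as a proof the proposal has a genuine gap, and it is exactly where you flag the ``main obstacle.'' The entire mathematical content of the theorem is the claim that the relevant extremum over the tensor-product complex is pinned down by the single diagonal quantity $\Gamma_{p',q':p,q}=\max_n\{\Gamma_{p',q'}(n)-\Gamma_{p,q}(n)\}$; you assert that ``the matching between the two staircases forces $n=n'$'' but give no argument. As stated this is not even the right shape of claim: a cycle representing the bottom of the $U$-tower in the tensor product is in general a sum of many product generators, not a single generator indexed by a pair $(n,n')$, so there is no a priori ``diagonal'' to collapse to. What is actually needed is two separate arguments: (i) an explicit cycle (a staircase-shaped sum of products of corner generators) whose filtration level realizes $\max\{\tau+\Gamma_{p',q':p,q},0\}$, giving the upper bound on $\nu^+$; and (ii) an obstruction (e.g.\ via pairing against the dual complex, or by showing any cycle homologous to the tower generator must contain a term of at least that filtration level) giving the matching lower bound. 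Your sketch conflates these into one unexamined ``optimization,'' and the grading bookkeeping (the role of $g_4(T_{p,q})-g_4(T_{p',q'})=\tau(T_{p,q}\#-T_{p',q'})$ in shifting between Alexander gradings of the two staircases) is left at the level of ``(something symmetric).'' Until step three is carried out in full, nothing beyond the trivial bound $\nu^+\ge 0$ has been established.
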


We first briefly check the simpler direction of Proposition~\ref{prop:nu}; that is that the pairs of knots listed, indeed satisfy \[ \max \{ \nu^+(T_{p,q} \# -T_{p',q'}), \nu^+(T_{p',q'} \# -T_{p,q}) \} \leq 1.\]
In fact, we also calculate the minimum (rather than the maximum) of $\nu^+(T_{p,q} \# -T_{p',q'})$ and $\nu^+(T_{p',q'} \# -T_{p,q})$, which we will use in Section~\ref{sec:Gordian}.

\begin{lemma}\label{lem:minandmaxnu^+}
For all the pairs of knots $\eqref{fam1}$--$\eqref{fam7}$ in Theorem~\ref{thm:main}, we have
\[ \max \{ \nu^+(T_{p,q} \# -T_{p',q'}), \nu^+(T_{p',q'} \# -T_{p,q}) \} = 1.\]
\noindent Furthermore, we have
\[ \min \{ \nu^+(T_{p,q} \# -T_{p',q'}), \nu^+(T_{p',q'} \# -T_{p,q}) \} = 1,\]
for the pairs $\eqref{fam3}$, $\eqref{fam4}$, $\eqref{fam5}$ if $n\in\{7,8\}$, the latter $4$ pairs of $\eqref{fam6}$, and $\eqref{fam7}$. In contrast, the remaining pairs provided in Theorem~\ref{thm:main} satisfy
\[ \min \{ \nu^+(T_{p,q} \# -T_{p',q'}), \nu^+(T_{p',q'} \# -T_{p,q}) \} = 0.\]
\end{lemma}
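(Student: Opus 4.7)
My plan is to reduce every case to the computation in Proposition~\ref{prop:BCG}, which, in combination with Proposition~\ref{prop:tau-torus} and the homomorphism property of $\tau$, gives the formula
\[
\nu^+(T_{p,q} \# -T_{p',q'}) = \max\left\{\tfrac{(p-1)(q-1)}{2}-\tfrac{(p'-1)(q'-1)}{2} + \Gamma_{p',q':p,q},\,0\right\}.
\]
Thus the entire lemma is a finite (over finitely many families parameterized by $n$) arithmetic verification: for each listed pair, compute the two $g_4$-differences (which is trivial) and the two quantities $\Gamma_{p,q:p',q'}$ and $\Gamma_{p',q':p,q}$. The first step is simply to tabulate the $g_4$-differences; for each of the seven families I would record whether the two knots share $g_4$ (in which case both summands $\tfrac{(p-1)(q-1)-(p'-1)(q'-1)}{2}$ vanish) or differ by $\pm1$.

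Next, I would organize the semigroup computations family by family. For family~\eqref{fam1}, $\Gamma_{2,2n+1}$ and $\Gamma_{2,2n+3}$ have very transparent enumerations, and one directly verifies $\Gamma_{2,2n+3:2,2n+1}=1$, $\Gamma_{2,2n+1:2,2n+3}=0$. Family~\eqref{fam2} is similar: the semigroup $\Gamma_{3,m}$ has a clean description via residues mod $3$. For families~\eqref{fam3} and~\eqref{fam4}, where the two knots have equal $g_4$, the needed inequalities become $\Gamma_{p,q:p',q'}\le 1$ and $\Gamma_{p',q':p,q}\ge 1$ (and symmetrically); here I would write the generic element of $\Gamma_{p,q}$ as $ap+bq$ with a canonical normal form, order these in $n$-independent fashion, and compare term-by-term against the other semigroup. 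This uniform-in-$n$ comparison is the only nontrivial work in the proof; once the ordered enumerations of the two semigroups are aligned, the relevant maxima can be read off. For family~\eqref{fam5} and the sporadic pairs in~\eqref{fam6} and~\eqref{fam7}, the two semigroups are finite-ish to write out and the computation is direct, so I would simply list the first several entries of $\Gamma_{p,q}$ and $\Gamma_{p',q'}$ until the differences stabilize (which they must, since the semigroups agree on $\{m\in\N : m>pq-p-q\}$-neighborhoods of each other).

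Having assembled these data, the conclusions of the lemma follow immediately by plugging into the displayed formula above: the maximum value of $1$ for the larger of the two $\nu^+$'s corresponds to the fact that in each listed pair one has a $g_4$-gap $\le 1$ and the relevant $\Gamma$ is $\le 1+$(opposite gap); the statement about the minimum separates the families where both $\nu^+$'s are positive from those where the knot of smaller $g_4$ actually satisfies $T_{p',q'}\#-T_{p,q}$ lying in the zero-class of $\nu^+$. The classification into the three regimes $\min=0$ versus $\min=1$ is the reason to split off the families~\eqref{fam3},~\eqref{fam4}, the latter two values in~\eqref{fam5}, the last four pairs in~\eqref{fam6}, and~\eqref{fam7}: these are exactly the pairs of equal $g_4$ (where the two $\tau$-contributions are both zero and thus both $\nu^+$ are forced to be at least $1$, using Proposition~\ref{prop:tau-nu-inequal}---so in fact the $\min=1$ assertion for these follows at once without any semigroup input, by noting that these pairs of torus knots are non-concordant).

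The main obstacle is the uniform-in-$n$ semigroup comparison for families~\eqref{fam3} and~\eqref{fam4}. The semigroups $\langle 3n+1, 9n+6\rangle$ and $\langle 3n+2, 9n+3\rangle$ (respectively $\langle 2n+1, 4n+6\rangle$ and $\langle 2n+3, 4n+2\rangle$) both have $n$ on the order of their ``conductor,'' so writing down the first $O(n^2)$ terms and comparing naively does not give a clean proof. I would circumvent this by exhibiting, for each pair, an explicit bijection between the generators-times-coefficients representations that shifts elements by a uniformly bounded amount, reducing the max-difference computation to checking a constant number of boundary terms; the bound $\Gamma\le 1$ then becomes elementary. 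Once those two families are handled uniformly, the remaining pairs can be dispatched by direct calculation.
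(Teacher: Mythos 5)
Your reduction to Proposition~\ref{prop:BCG} is the right starting point, but the final step of your argument---the classification of which pairs have $\min=1$ versus $\min=0$---contains a genuine error. You assert that the $\min=1$ pairs are ``exactly the pairs of equal $g_4$,'' and that for these the conclusion follows because both $\tau$-contributions vanish and non-concordance forces $\nu^+\geq 1$. Neither claim is correct. First, the families \eqref{fam3} and \eqref{fam4} do \emph{not} have equal $4$-ball genus: for instance $g_4(T_{3n+1,9n+6})=\tfrac{27n^2+15n}{2}$ while $g_4(T_{3n+2,9n+3})=\tfrac{27n^2+15n+2}{2}$, a gap of $1$; the same holds for \eqref{fam5} with $n\in\{7,8\}$, for $\{T_{2,11},T_{4,5}\}$, and for \eqref{fam7}, all of which nevertheless have $\min=1$. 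Second, the implication ``equal $g_4$ and non-concordant $\Rightarrow$ both $\nu^+\geq 1$'' is false: $\nu^+$ is only bounded below by $\tau$, and $\nu^+=0$ does not detect non-sliceness (e.g.\ the left-handed trefoil has $\nu^+=0$). Indeed the pair $\{T_{2,7},T_{3,4}\}$ has equal $g_4$ and the knots are non-concordant, yet $\nu^+(T_{3,4}\#-T_{2,7})=0$ because $\Gamma_{2,7:3,4}=0$; this pair correctly belongs to the $\min=0$ list of the lemma, which your criterion would misclassify. The $\min=1$ statements genuinely require semigroup input: for \eqref{fam3} one needs a lower bound such as $\Gamma_{3n+2,9n+3}(3)-\Gamma_{3n+1,9n+6}(3)=(6n+4)-(6n+2)=2$, which combined with $\tau(T_{3n+1,9n+6}\#-T_{3n+2,9n+3})=-1$ forces that $\nu^+$ to be at least $1$.

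Separately, the ``main obstacle'' you identify---a uniform-in-$n$ comparison of the full semigroups for families \eqref{fam3} and \eqref{fam4}---is avoidable. The paper obtains the upper bound $\nu^+\leq 1$ for these families for free from the genus one cobordisms already constructed in Proposition~\ref{prop:existenceofcobs}, via $\nu^+(T_{p,q}\#-T_{p',q'})\leq g_4(T_{p,q}\#-T_{p',q'})\leq 1$ together with Propositions~\ref{prop:tau-torus} and~\ref{prop:tau-nu-inequal}; only the single-element comparison above is then needed for the lower bound. Your proposed bijection between normal forms of semigroup elements might well succeed, but it is not carried out here and is unnecessary extra work.
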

\begin{proof} For the pairs $\eqref{fam1}$, we have $\Gamma_{2,2n+1:2,2n+3}=0$ and $\Gamma_{2,2n+3:2,2n+1}=1$. Using Proposition~\ref{prop:BCG}, we get $\nu^+(T_{2,2n+1} \# -T_{2,2n+3})=0$ and $\nu^+(T_{2,2n+3} \# -T_{2,2n+1})=1$. The same argument applies for the pairs $\eqref{fam2}$.

For the pairs $\eqref{fam3}$, by Proposition~\ref{prop:existenceofcobs}, \ref{prop:tau-torus}, and \ref{prop:tau-nu-inequal} we have 
\[ \nu^+(T_{3n+1,9n+6} \# -T_{3n+2,9n+3})\leq 1\ \text{ and }\ \nu^+(T_{3n+2,9n+3} \# -T_{3n+1,9n+6})=1.\] Further, an easy computation gives $\Gamma_{3n+1,9n+6}(3)=6n+2$ and $\Gamma_{3n+2,9n+3}(3)=6n+4$. Therefore, by Proposition~\ref{prop:BCG}, we have
\begin{equation*}\begin{split}
1 = -1+ \Gamma_{3n+2,9n+3}(3)- \Gamma_{3n+1,9n+6}(3) &\leq -1 +\Gamma_{3n+2,9n+3:3n+1,9n+6}\\
& \leq \nu^+(T_{3n+1,9n+6} \# -T_{3n+2,9n+3}).
\end{split}
\end{equation*} Hence $\nu^+(T_{3n+1,9n+6} \# -T_{3n+2,9n+3})=1$. The same argument applies for the pairs $\eqref{fam4}$. Lastly, the rest of the cases easily follow from Proposition~\ref{prop:BCG}.
\end{proof}

\noindent Now, we are ready to prove Proposition~\ref{prop:nu}.

\begin{proof}[Proof of Proposition~\ref{prop:nu}]
By Lemma~\ref{lem:minandmaxnu^+}, it remains to show that, whenever
\[\max \{ \nu^+(T_{p,q} \# -T_{p',q'}), \nu^+(T_{p',q'} \# -T_{p,q}) \} \leq 1,\]
then $\{T_{p,q}, T_{p',q'} \}$ must be one of the pairs listed in $\eqref{fam1}$--$\eqref{fam7}$.

Suppose $ \max \{ \nu^+(T_{p,q} \# -T_{p',q'}), \nu^+(T_{p',q'} \# -T_{p,q}) \} \leq 1.$ Moreover, w.l.o.g. we may assume that $p \leq p'$, $p<q$, and $p'<q'$. By Proposition~\ref{prop:tau-nu-inequal} we have $|\tau(T_{p,q} \# - T_{p',q'})| \leq 1$ and by Proposition~\ref{prop:tau-torus} we have
\[(p-1)(q-1)-(p'-1)(q'-1) \in \{ -2, 0 ,2\}.\] We deal with each case separately.

\textbf{Case 1: \begin{equation}\label{eqn:case1}(p-1)(q-1)-(p'-1)(q'-1)=-2.
\end{equation}}\noindent Combining $\nu^+(T_{p,q} \# -T_{p',q'}) \leq 1$ and Proposition~\ref{prop:BCG}, we have
\[p'-p = \Gamma_{p',q'}(2)-\Gamma_{p,q}(2) \leq \Gamma_{p',q':p,q} \leq \nu^+(T_{p,q} \# -T_{p',q'})+1 \leq 2.\]
Similarly, since $\nu^+(T_{p',q'} \# -T_{p,q}) \leq 1$, we have
\[p-p' = \Gamma_{p,q}(2)-\Gamma_{p',q'}(2) \leq \Gamma_{p,q:p',q'} \leq \nu^+(T_{p',q'} \# -T_{p,q})-1 \leq 0.\]
Combining above two inequalities, we have $0\leq p'-p\leq 2$.

If $p'-p=0$, then the equation \eqref{eqn:case1} simplifies to $(p-1)(q'-q)=2$. Then either $p=2$ and $q'-q=2$ or $p=3$ and $q'-q=1$. These are listed in $\eqref{fam1}$ and $\eqref{fam2}$.

If $p'-p=1$, then the equation \eqref{eqn:case1} simplifies to $q=q'+\frac{q'-3}{p-1}$. Let $m=\frac{q'-3}{p-1}$, so that $q=q'+m$. Note that $m$ is a positive integer, since $q'>p'>p \geq 2$. We can rewrite the pairs as $\{T_{p,mp+3}, T_{p+1,m(p-1)+3}\}$. First, suppose $m=1$, namely, we are considering the pairs $\{T_{p,p+3}, T_{p+1,p+2}\}$. If $p \geq 3$, then $\Gamma_{p,p+3}(3) = p+3$ and $\Gamma_{p+1,p+2}(3) = p+2$. Further, we have
\[1 = \Gamma_{p,p+3}(3)- \Gamma_{p+1,p+2}(3) \leq \Gamma_{p,p+3:p+1,p+2} \leq \nu^+(T_{p+1,p+2} \# -T_{p,p+3})-1 \leq 0,\]
which is a contradiction. If $p=2$, we get $\{T_{2,5}, T_{3,4}\}$ which is listed in $\eqref{fam5}$. When $m=2,4,$ and $5$, a similar argument gives $\{T_{2,7}, T_{3,5}\}, \{T_{2,11}, T_{3,7}\},$ and $\{T_{2,13}, T_{3,8}\}$ which are listed in $\eqref{fam5}$. When $m=3$, the pairs are listed in $\eqref{fam3}$. Now, suppose that $m\geq 6$, then $\Gamma_{p,mp+3}(4)=3p$ and $\Gamma_{p+1,m(p-1)+3}(4)=3p+3$. Therefore we have
\begin{equation*}\begin{split}
3 = \Gamma_{p+1,m(p-1)+3}(4)- \Gamma_{p,mp+3}(4) &\leq \Gamma_{p+1,m(p-1)+3: p,mp+3}\\
& \leq \nu^+(T_{p,mp+3} \# -T_{p+1,m(p-1)+3})+1\\
&\leq 2,
\end{split}
\end{equation*}which is a contradiction.

If $p'-p=2$, then the equation \eqref{eqn:case1} simplifies to $q=q'+\frac{2q'-4}{p-1}$. Let $m=2\cdot\frac{q'-2}{p-1}$, so that $q=q'+m$. Note that since $q'>p'> p \geq 2$, we see that $m$ is a positive integer greater than $2$. We can rewrite the pairs as $\{T_{p,\frac{m(p+1)+4}{2}}, T_{p+2,\frac{m(p-1)+4}{2}}\}$. First, suppose $m=3$, then the pairs are $\{T_{p,\frac{3p+7}{2}}, T_{p+2,\frac{3p+1}{2}}\}$ and note that $p$ has to be odd. If $3 \leq p \leq 7$, then $\Gamma_{p,\frac{3p+7}{2}}(3) = 2p$ and $\Gamma_{p+2,\frac{3p+1}{2}}(3) = \frac{3p+1}{2}$. Then we have
\begin{equation*}\begin{split}
2p-\frac{3p+1}{2}=\Gamma_{p,\frac{3p+7}{2}}(3)-\Gamma_{p+2,\frac{3p+1}{2}}(3)
&\leq \Gamma_{p,\frac{3p+7}{2}:p+2,\frac{3p+1}{2}}\\
&\leq \nu^+(T_{p+2,\frac{3p+1}{2}} \# -T_{p,\frac{3p+7}{2}})-1\\
&\leq 0,
\end{split}
\end{equation*}which is a contradiction. If $p>7$, then $\Gamma_{p,\frac{3p+7}{2}}(3) = \frac{3p+7}{2}$ and $\Gamma_{p+2,\frac{3p+1}{2}}(3) = \frac{3p+1}{2}$. Similarly as above, it is straight forward to check that this leads to a contradiction. For the case $m=4$, we get the pairs that are listed in $\eqref{fam4}$. Now, assume $m>4$. When $1+\frac{8}{m-4} \leq p$, we have $\Gamma_{p,\frac{m(p+1)+4}{2}}(3) = 2p$ and $\Gamma_{p+2,\frac{m(p-1)+4}{2}}(3) = 2p+4$. We have
\begin{equation*}\begin{split}
4 = \Gamma_{p+2,\frac{m(p-1)+4}{2}}(3)- \Gamma_{p,\frac{m(p+1)+4}{2}}(3) &\leq \Gamma_{p+2,\frac{m(p-1)+4}{2}:p,\frac{m(p+1)+4}{2}}\\
& \leq \nu^+(T_{p,\frac{m(p+1)+4}{2}} \# -T_{p+2,\frac{m(p-1)+4}{2}})+1\\
&\leq 2,
\end{split}
\end{equation*} which is a contradiction. When $1+\frac{6}{m-4} \leq p < 1+\frac{8}{m-4}$, we have $\Gamma_{p,\frac{m(p+1)+4}{2}}(3) = 2p$ and $\Gamma_{p+2,\frac{m(p-1)+4}{2}}(3) = \frac{m(p-1)+4}{2}$. It is straight forward to check that a similar argument as above gives a contradiction. Hence we only need to consider the case when $p<1+\frac{6}{m-4}$. Since $p\geq 2$, we see that $m\leq 9$. There are three possible pairs, $\{T_{5,17}, T_{7,12}\}, \{T_{2,11}, T_{4,5}\},$ and $\{T_{3,14}, T_{5,8}\}$. The first pair can be ruled out, since $\nu^+(T_{7,12}\# - T_{5,17})=2$, the second pair is listed in $\eqref{fam6}$, and the last pair is listed in $\eqref{fam7}$.

\textbf{Case 2: \begin{equation}\label{eqn:case2}(p-1)(q-1)-(p'-1)(q'-1)=0.\end{equation}}\noindent Combining $\nu^+(T_{p',q'} \# -T_{p,q}) \leq 1$ and Proposition~\ref{prop:BCG}, we have
\[p'-p = \Gamma_{p',q'}(2)-\Gamma_{p,q}(2) \leq \Gamma_{p',q':p,q} \leq \nu^+(T_{p,q} \# -T_{p',q'}) \leq 1.\] Since we are assuming that $p\leq p'$, we have $p'-1 \leq p \leq p'$. Further, $p \neq p'$, since we are assuming that we have a pair of two distinct torus knots. Hence $p=p'-1$ and the equation \eqref{eqn:case2} simplifies to $q=q'+\frac{q'-1}{p-1}$. Let $m=\frac{q'-1}{p-1}$, so that $q=q'+m$. Since $q'>p'>p \geq 2$, we see that $m$ is a positive integer greater than $1$. The pairs can be rewritten as $\{T_{p,mp+1},T_{p+1,m(p-1)+1} \}$. If $m=2$, the pairs are $\{T_{p,2p+1},T_{p+1,2p-1} \}$. If $p\geq 5$, then $\Gamma_{p,2p+1}(6) = 3p+1$ and $\Gamma_{p+1,2p-1}(6) = 3p+3$. Therefore, we have
\begin{equation*}\begin{split}
2 = \Gamma_{p+1,2p-1}(6)- \Gamma_{p,2p+1}(6) &\leq \Gamma_{p+1,2p-1:p,2p+1}\\
& \leq \nu^+(T_{p,2p+1} \# -T_{p+1,2p-1})\\
&\leq 1,
\end{split}
\end{equation*}
which is a contradiction. When $3\leq p \leq 5$, we get pairs $\{T_{3,7}, T_{4,5}\}$ and $\{T_{4,9}, T_{5,7}\}$ which are listed in $\eqref{fam6}$. If $m\geq 3$ and $p \geq 1+\frac{3}{m-2}$, then $\Gamma_{p,mp+1}(3) = 2p$ and $\Gamma_{p+1,m(p-1)+1}(3) = 2p+2$. Therefore, we have
\begin{equation*}\begin{split}
2 = \Gamma_{p+1,m(p-1)+1}(3)- \Gamma_{p,mp+1}(3) &\leq \Gamma_{p+1,m(p-1)+1:p,mp+1}\\
& \leq \nu^+(T_{p,mp+1} \# -T_{p+1,m(p-1)+1})\\
&\leq 1,
\end{split}
\end{equation*}
which is a contradiction. The remaining cases are when $m\geq 3$ and $2 \leq p < 1+\frac{3}{m-2}$, and we get pairs $\{T_{2,7}, T_{3,4}\}, \{T_{3,10}, T_{4,7}\},$ and $\{T_{2,9}, T_{3,5}\}$ which are listed in $\eqref{fam6}$.

\textbf{Case 3: \begin{equation}\label{eqn:case3}(p-1)(q-1)-(p'-1)(q'-1)=2.\end{equation}}\noindent Combining $\nu^+(T_{p,q} \# -T_{p',q'}) \leq 1$ and Proposition~\ref{prop:BCG}, we have
\[p'-p = \Gamma_{p',q'}(2)-\Gamma_{p,q}(2) \leq \Gamma_{p',q':p,q} \leq \nu^+(T_{p,q} \# -T_{p',q'}) -1 \leq 0.\] Since we are assuming that $p\leq p'$, we have $p=p'$. The equation \eqref{eqn:case3} simplifies to $(p-1)(q-q')=2$. Then either $p=2$ and $q-q'=2$ or $p=3$ and $q-q'=1$. These are listed in $\eqref{fam1}$ and $\eqref{fam2}$.\end{proof}

Given that we have now established Proposition~\ref{prop:nu}, we take it together with the construction of cobordisms in the previous section to conclude Theorem~\ref{thm:main}.
\begin{proof}[Proof of Theorem~\ref{thm:main}]
Proposition~\ref{prop:existenceofcobs} yields a genus one cobordism, whenever the pair of torus knots is in one of the families \eqref{fam1}--\eqref{fam6}.

The in particular part of Proposition~\ref{prop:nu} establishes that, whenever the pair of torus knots is in one of the families \eqref{fam1}--\eqref{fam7}, then the cobordism distance is at least two.
\end{proof}
\section{Proof of Corollary~\ref{cor:3}: Obstructing crossing changes}\label{sec:Gordian}
We first recall the behavior of the $\nu^+$ invariant with respect to crossing changes.

\begin{proposition}[{\cite[Theorem 1.3]{Bodnar-Celoria-Golla:2017-1}}]\label{prop:nucrossing} If $K_+$ is obtained from $K_-$ by changing a negative crossing into a positive one, then
\[\pushQED{\qed}\nu^{+}(K_{+})-1\leq \nu^{+}(K_-)\leq \nu^{+}(K_{+}).\qedhere\]
\end{proposition}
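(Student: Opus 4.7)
The plan is to deduce this crossing-change inequality from the description of $\nu^+$ in terms of the sequence of local $h$-invariants $V_s(K)$, together with a cobordism analysis of the crossing change.

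First, I would invoke the standard reformulation $\nu^+(K) = \min\{s \geq 0 : V_s(K) = 0\}$, where $\{V_s(K)\}_{s\in\Z}$ is the non-increasing sequence of non-negative integers extracted from $\mathsf{CFK}^\infty(K)$ (equivalently, from the $d$-invariants of large positive surgeries on $K$), subject to $V_{s+1}(K) \leq V_s(K) \leq V_{s+1}(K)+1$. Through this reformulation, the proposition reduces to establishing the two pointwise bounds
\[ V_s(K_+) \leq V_s(K_-) \quad\text{and}\quad V_{s+1}(K_-) \leq V_s(K_+) \]
for every $s \in \Z$. The first yields $\nu^+(K_-) \leq \nu^+(K_+)$ directly, and the second, combined with the unit-step property above, yields $\nu^+(K_+) - 1 \leq \nu^+(K_-)$.

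Next, I would realize the negative-to-positive crossing change geometrically. A small unknot $U \subset S^3$ whose spanning disk meets the two crossing strands once transversely each admits a framing for which $(-1)$-surgery takes $(S^3, K_-)$ to $(S^3, K_+)$. The associated $2$-handle cobordism $W\colon S^3 \to S^3$, viewed as a blow-up of the double point of a natural immersed annulus cobounded by $K_-$ and $K_+$, carries a concordance between the two knots inside a negative-definite $4$-manifold. Standard cobordism maps in Heegaard Floer theory, applied to large positive surgeries on the knots and to the distinguished $\mathrm{Spin}^c$ structures used to define $V_s$, then produce the two inequalities above; equivalently, one can assemble them directly from the surgery exact triangle around $U$ together with Ozsv\'{a}th--Szab\'{o}'s mapping cone formula. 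The asymmetry in the direction of these inequalities reflects the asymmetry between changing a negative crossing to a positive one and the reverse.

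The principal obstacle, in my view, is the careful bookkeeping of orientations, framings, and $\mathrm{Spin}^c$ structures needed to make the signs of the cobordism maps line up with the sign of the crossing change---ensuring that $V_s$ can only grow (not shrink) upon passing from $K_-$ to $K_+$, and that the shift is by at most one in $s$. Once this bookkeeping is in hand, the one-step property of the $V_s$ sequence converts the pointwise bounds into the asserted asymmetric inequality $\nu^+(K_+) - 1 \leq \nu^+(K_-) \leq \nu^+(K_+)$.
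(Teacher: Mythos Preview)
The paper does not actually prove this statement: the proposition is quoted verbatim from \cite{Bodnar-Celoria-Golla:2017-1} and closed with a \texttt{qed} symbol, so there is nothing in the paper to compare your argument against. Your overall strategy---reformulating $\nu^+$ as $\min\{s\geq 0 : V_s(K)=0\}$ and then controlling the $V_s$ under a crossing change via a negative-definite $2$-handle cobordism / blow-up of the immersed annulus---is indeed the approach taken in the cited source, so in spirit you are reproducing the original proof rather than the (nonexistent) one in this paper.

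That said, there is a genuine slip in your pointwise inequalities: you have $K_+$ and $K_-$ interchanged. From $V_s(K_+)\leq V_s(K_-)$ one deduces $\nu^+(K_+)\leq \nu^+(K_-)$, the opposite of what you claim and of what the proposition asserts. A quick sanity check: take $K_-$ the unknot and $K_+=T_{2,3}$; then $V_0(K_+)=1>0=V_0(K_-)$. The correct pair of bounds is
\[
V_s(K_-)\;\leq\; V_s(K_+)\qquad\text{and}\qquad V_{s+1}(K_+)\;\leq\; V_s(K_-),
\]
the first giving $\nu^+(K_-)\leq \nu^+(K_+)$ and the second giving $\nu^+(K_+)\leq \nu^+(K_-)+1$. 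This is consistent with the geometry: the $(-1)$-framed $2$-handle on the clasp unknot produces a \emph{negative}-definite cobordism from $S^3_N(K_-)$ to $S^3_N(K_+)$, and the induced map on $\mathrm{HF}^+$ decreases (not increases) the relevant correction terms, which translates to $V_s(K_+)\geq V_s(K_-)$. Once you fix this orientation of the inequalities, your sketch goes through.
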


Recall that for a given knot $K$ and a unit complex number $\omega$, Tristram and Levine  defined the $\omega$-signature
$\sigma_\omega(K)$ to be the signature of the hermitian matrix
\[M_\omega = (1-\omega)A + (1-\bar{\omega})A^{\top},\] where $A$ is a Seifert matrix of $K$ \cite{Levine:1969-1, Tristram:1969-1}. If $M_ \omega$ has non-zero determinant (equivalently, $\omega$ is not a root of the Alexander polynomial of $K$), $\omega$ is called \emph{regular} for $K$. There is an analogous property of Proposition~\ref{prop:nucrossing} for the Tristram-Levine signatures which follows from simple consideration of the Seifert
matrices (see e.g.~\cite{Giller:1982-1, Lipson:1990-1}).
\begin{proposition}\label{prop:signaturecrossing} If $K_+$ is obtained from $K_-$ by changing a negative crossing into a positive one, then, for $\omega\in S^1$ that are regular for $K_{+}$ and $K_{-}$, we have
\[\pushQED{\qed}\frac{-\sigma_\omega(K_{+})}{2}-1\leq \frac{-\sigma_\omega(K_{-})}{2}\leq \frac{-\sigma_\omega(K_{+})}{2}.\qedhere\]
\end{proposition}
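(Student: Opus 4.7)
The plan is to realize $K_+$ and $K_-$ as boundaries of two Seifert surfaces whose Seifert matrices differ in only one diagonal entry, and then to read off the signature change. Concretely, let $L_0$ denote the oriented resolution of the distinguished crossing; this is a two-component link, and one can choose a Seifert surface $\Sigma_0$ for $L_0$ in a neighborhood of which the two strands near the crossing lie in a common plane. Attaching a half-twisted band at the crossing recovers a Seifert surface for either $K_+$ or $K_-$; the two resulting surfaces $\Sigma_+$ and $\Sigma_-$ share the same underlying abstract surface and differ only by the sign of the twist in this extra band.

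Extending a basis for $H_1(\Sigma_0;\Z)$ by the core curve $\gamma$ of the band, one obtains bases of $H_1(\Sigma_{\pm};\Z)$ with respect to which the Seifert matrices $A_+$ and $A_-$ coincide in every entry other than the $(\gamma,\gamma)$ entry, which records the self-linking of the push-off of $\gamma$ and hence differs by exactly $1$ between $\Sigma_+$ and $\Sigma_-$. A sign check (e.g.\ on the local model where the crossing change turns the unknot into itself via a Hopf band) shows that with the standard conventions one has $A_- = A_+ + E$, where $E$ is the matrix whose only nonzero entry is a $1$ in position $(\gamma,\gamma)$.

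Substituting into the definition of the Tristram--Levine matrix yields
\[
M_\omega(K_-) - M_\omega(K_+) = (1-\omega)E + (1-\bar\omega)E^{\top} = \bigl(2 - 2\operatorname{Re}\omega\bigr)\,E,
\]
which is positive semidefinite of rank at most one for every $\omega \in S^1 \setminus \{1\}$. By Weyl's inequality (equivalently, Cauchy interlacing), adding a rank-one positive semidefinite Hermitian matrix changes the signature of a Hermitian matrix by $0$ or $2$, provided no eigenvalue crosses zero. Since $\omega$ is regular for both $K_+$ and $K_-$, the matrices $M_\omega(K_{\pm})$ are nonsingular, so the signature is unambiguous and we conclude
\[
\sigma_\omega(K_+) \leq \sigma_\omega(K_-) \leq \sigma_\omega(K_+) + 2.
\]
Dividing by $-2$ and rearranging gives exactly the claimed inequality.

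The only real obstacle is the bookkeeping in the first two steps: isolating the crossing inside a Seifert surface so that the entire crossing change is localized in a single band, and pinning down the sign so that $A_- - A_+$, rather than $A_+ - A_-$, is positive semidefinite. Everything after that is a one-line linear algebra fact about rank-one Hermitian perturbations, which is why the paper is content to cite~\cite{Giller:1982-1, Lipson:1990-1} and call the argument routine.
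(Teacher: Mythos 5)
Your argument is correct and is precisely the ``simple consideration of the Seifert matrices'' that the paper invokes: the paper gives no proof of its own, citing~\cite{Giller:1982-1, Lipson:1990-1}, and your write-up fills in exactly that standard argument (localizing the crossing change to a single band so that $A_- = A_+ + E$ for a rank-one $E$, and then applying interlacing to the resulting positive semidefinite perturbation of $M_\omega$). The sign check and the use of regularity to pin the signature jump to $0$ or $2$ are both handled correctly.
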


\begin{proof}[Proof of Corollary~\ref{cor:3}]
The if-part is well-known to experts, and can be readily seen from the standard knot diagrams arising from positive braid closures. For an explicit reference, we point to the more general statements of Theorem~2 and Theorem~3 in~\cite{Feller_14_GordianAdjacency}.

For the only if direction, we use Proposition~\ref{prop:nucrossing}. We first note that, by Proposition~\ref{prop:nu}, all pairs different from $\eqref{fam1}$--$\eqref{fam7}$ in Theorem~\ref{thm:main} have
\[\max \{ \nu^+(T_{p,q} \# -T_{p',q'}), \nu^+(T_{p',q'} \# -T_{p,q}) \}\geq 2;\] thus, at least two crossing changes (in fact of the same type) are needed to turn $T_{p,q} \# -T_{p',q'}$ into a knot with $\nu^+=0$. In particular, for all pairs different from $\eqref{fam1}$--$\eqref{fam7}$, at least two crossing changes are needed to turn $T_{p,q}$ into $T_{p',q'}$ (or even a knot concordant to $T_{p',q'}$). Consequently, we are left to consider the pairs $\eqref{fam1}$--$\eqref{fam7}$.

Next, we note that the pairs described in the `furthermore'-part of Lemma~\ref{lem:minandmaxnu^+}, one needs at least 2 crossing changes to turn one of the knots into the other. Indeed, Proposition~\ref{prop:nucrossing} implies that knots $K$ and $L$ with $\nu^{+}(K\#-L)>0$ have the property that any sequence of crossing changes turning $K$ into $L$ must contain at least one positive-to-negative crossing change (since even any sequence of crossing changes that turn $K\#-L$ into a knot with $\nu^+=0$ must contain a positive-to-negative crossing change).
Consequently, knots $K$ and $L$ such that $\nu^{+}(K\#-L)>0$ and $\nu^{+}(L\#-K)>0$ have Gordian distance at least two.

It remains to discuss the pairs $\{ T_{2,7}, T_{3,4}\}$ and $\{ T_{2,9}, T_{3,5}\}$. We note that $\nu^+(T_{2,7}\#-T_{3,4})=\nu^+(T_{2,9}\#-T_{3,5})=1$, which can be easily verified by using Proposition~\ref{prop:BCG}. Therefore (as argued in the last paragraph) any sequence of crossing changes turning $T_{2,7}$ and $T_{2,9}$ into $T_{3,4}$ and $T_{3,5}$, respectively, must contain at least one positive-to-negative crossing change. We complete the proof by using Proposition~\ref{prop:signaturecrossing}. Indeed, a short calculation yields
\[\frac{-\sigma_{e^{2\pi i\alpha}}(T_{3,4}\#-T_{2,7})}{2}=\frac{-\sigma_{e^{2\pi i\beta}}(T_{3,5}\# -T_{2,9})}{2}=1\]
\[\text{for }\alpha\in \left(\frac{2}{12},\frac{3}{14}\right) \text{ and } \beta\in \left(\frac{2}{15},\frac{3}{18}\right)\cup \left(\frac{4}{15},\frac{5}{18}\right).\]
This calculation can, for example, be done using~\cite[Proposition~5.1]{GambaudoGhys_BraidsSignatures}, which is a generalization of the classical formula by Brieskorn and Hirzebruch~\cite{Brieskorn_DifftopovonSing,Hirzebruch_SingExoticBourbaki} for the classical signature $\sigma=\sigma_{-1}$ of torus knots. Therefore, any sequence of crossing changes turning $T_{2,7}$ and $T_{2,9}$ into $T_{3,4}$ and $T_{3,5}$, respectively, must contain at least one negative-to-positive crossing. Consequently, the pairs $\{ T_{2,7}, T_{3,4}\}$ and $\{ T_{2,9}, T_{3,5}\}$ have Gordian distance at least two.\end{proof}

\begin{remark}
 We note that for the only if direction of the proof for Corollary~\ref{cor:3} for the pairs $\{ T_{2,7}, T_{3,4}\}$ and $\{ T_{2,9}, T_{3,5}\}$, we could have used the Tristram-Levine signatures only, rather than $\nu^+$.

More generally, one may wonder whether Corollary~\ref{cor:3} can be obtained by use of only Tristram-Levine signatures as obstructions. This would in particular imply that the pairs listed in Corollary~\ref{cor:3} are the only pairs of torus knots that arise as the boundary of a properly immersed locally flat annulus in $S^3\times [0,1]$ that selfintersects transversely and in at most one point. However, the formula for the Tristram-Levine signature ends up being somewhat involved and the authors do not see how to use them to obtain the result.\end{remark}


\section{Positive braids and decomposable Lagrangian cobordisms}\label{sec:lag}
In this section, we prove Lemma~\ref{lemma:posbraidstodecompcobs} and Theorem~\ref{thm:genus1deccob}.
For this, we start with a brief overview of Lagrangian cobordism between Legendrian links. For more details, see e.g.~\cite{Etnyre:2005-1, Chantraine:2010-1, Cornwell-Ng-Sivek:2016-1, Baldwin-Lidman-Wong:2019-1}. 

\begin{figure}[htbp]
\includegraphics[width=12cm]{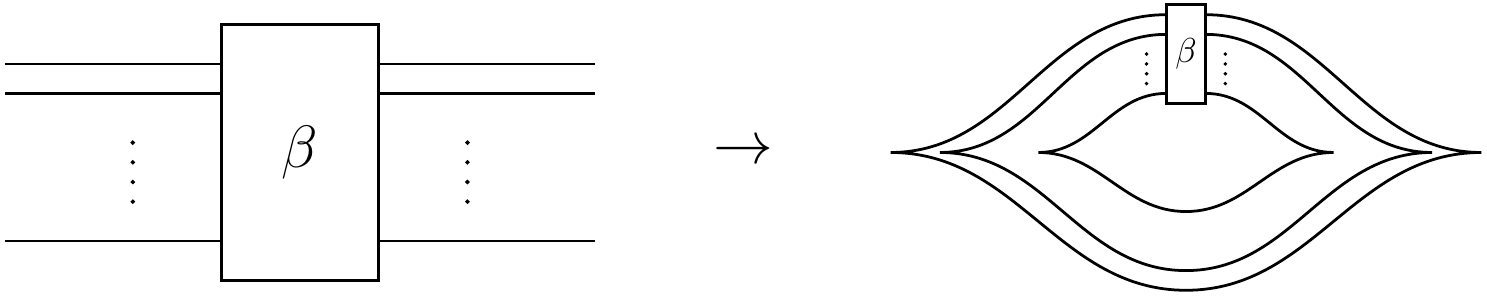}
\caption{Front diagram of a Legendrian link $\Lambda_\beta$ representing the braid closure $\hat{\beta}$ of a positive braid $\beta$.}
\label{fig:braid}
\end{figure}

Let $\xi_{\mathrm{std}}$ be the standard contact structure on $\mathbb{R}^3$ given by kernel of $\alpha_{\mathrm{std}} = dz-ydx$. Recall that a link $\Lambda \subset (\mathbb{R}^3, \xi_{\mathrm{std}})$ is \emph{Legendrian} if $T_p\Lambda\subset (\xi_{\mathrm{std}})_p$ for all  $p\in\Lambda$. A positive braid $\beta$ defines the front diagram of an oriented Legendrian link $\Lambda_\beta$ as indicated in Figure~\ref{fig:braid}. We say that $\Lambda_\beta$ is the \emph{Legendrian closure} of $\beta$ and it can be checked that $\Lambda_\beta$ is a Thurston-Bennequin-number-maximizing Legendrian representative of the  braid closure $\hat{\beta}$. Let $\mathbb{R}^4 = \mathbb{R}_t \times \mathbb{R}^3$ be the symplectization of $(\mathbb{R}^3, \xi_{\mathrm{std}})$, with symplectic form $d(e^t\alpha_{\mathrm{std}})$. An embedded surface $L\subset \mathbb{R}^4$ is \emph{Lagrangian} if $d(e^t\alpha_{\mathrm{std}})|_L\equiv 0$. 

\begin{definition}\label{Def:LagCob}
Given two Legendrian links $\Lambda_-$ and $\Lambda_+$, a \emph{Lagrangian cobordism} from $\Lambda_-$ to $\Lambda_+$ is an embedded Lagrangian $L\subset \mathbb{R}^4$ such that 
\begin{align*}
L\cap ((-\infty,-T)\times \R^3) &= (-\infty,-T)\times\Lambda_-,\\
L\cap ((T, \infty)\times \R^3) &= (T, \infty)\times \Lambda_+
\end{align*}
for some $T>0$. A Lagrangian cobordism is called \emph{exact} if there exists a function $f\colon L \to \mathbb{R}$ such that $df=\left(e^t\alpha_{\mathrm{std}}\right)|_L.$
\end{definition}

Note that the definition of Lagrangian cobordism is not symmetric. In fact, it is known that there exists a pair of Legendrian knots $\Lambda_-$ and $\Lambda_+$ where there is a genus zero Lagrangian cobordism from $\Lambda_-$ to $\Lambda_+$ but no Lagrangian cobordism from $\Lambda_+$ to $\Lambda_-$ \cite{Chantraine:2015-1}. 

It was proven by Chaintraine \cite{Chantraine:2010-1} that, if there is an exact Lagrangian cobordism from $\Lambda_-$ to $\Lambda_+$, then 
\begin{equation}\label{eqn:lagtb}
\mathit{tb}(\Lambda_+)-\mathit{tb}(\Lambda_-) = -\chi(L)\ \text{ and }\ \mathit{rot}(\Lambda_+)=\mathit{rot}(\Lambda_-),
\end{equation}
where $\mathit{tb}$ and $\mathit{rot}$ are Thurston-Bennequin number and rotation number, respectively. The following theorem provides many interesting Lagrangian cobordisms.

\begin{figure}[htbp]
\includegraphics[width=14cm]{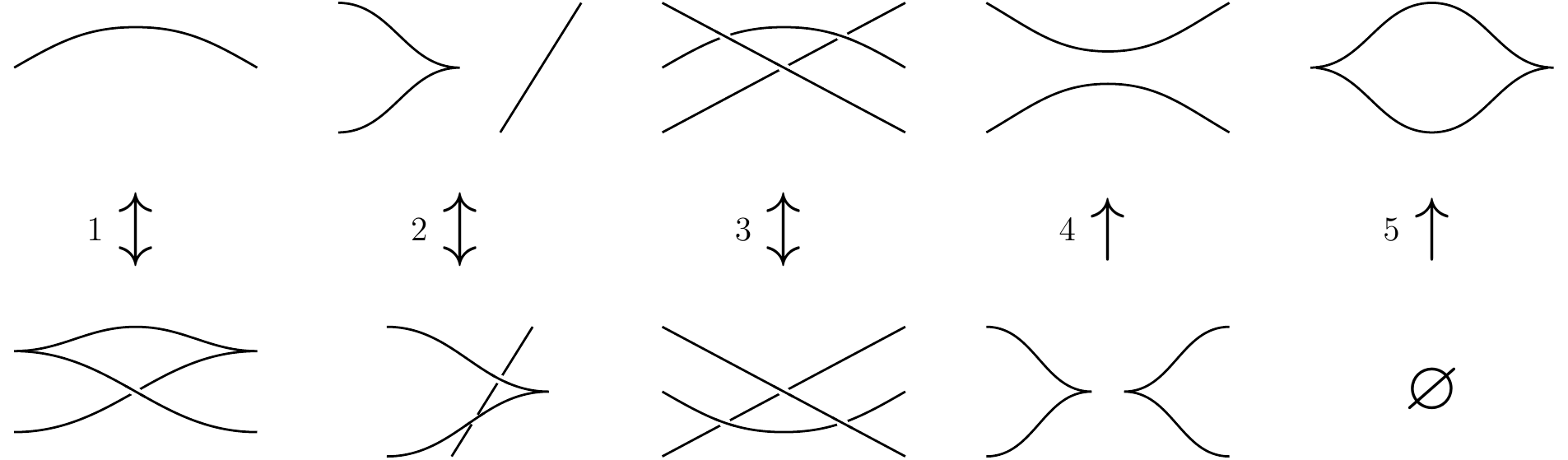}
\caption{Moves 1, 2, and 3 correspond to a Legendrian isotopy, move 4 corresponds to a pinch, and move 5 corresponds to a birth. {These moves are called \emph{elementary cobordism} (including horizontal and vertical reflections)}.}
\label{fig:lagrangian}
\end{figure}

\begin{theorem}[{\cite{Bourgeois-Sablof-Traynor:2015-1, Chantraine:2010-1, Dimitroglou:2016-1, Ekholm-Honda-Kalman:2016-1}}]\label{thm:decomp}If {diagrams of} two Legendrian links $\Lambda_-$ and $\Lambda_+$ are related by a sequence of moves shown in Figure~\ref{fig:lagrangian}, then there exists an exact Lagrangian cobordism from $\Lambda_-$ to $\Lambda_+$.\qed
\end{theorem}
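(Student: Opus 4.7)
The plan is to address each of the three move-types in Figure~\ref{fig:lagrangian} separately by constructing an explicit exact Lagrangian local model in the symplectization $\R_t\times\R^3$, and then to argue that these local pieces can be concatenated along cylindrical regions to yield the asserted cobordism. The move-types to handle are the Legendrian Reidemeister-type isotopies (moves 1--3), the pinch (move 4), and the birth (move 5).

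For the Legendrian isotopy moves, I would begin with a one-parameter family of contactomorphisms $\phi_s$ of $(\R^3,\xi_{\mathrm{std}})$ whose time-$1$ map realizes the desired isotopy from $\Lambda_-$ to $\Lambda_+$. Lifting the generating contact vector field to the symplectization in the standard way produces a symplectic vector field; multiplying by a bump function in $t$ so that the flow is the identity for $t$ near $\pm\infty$, and then taking the trace of $\Lambda_-$ under this flow, produces a Lagrangian that is cylindrical outside a compact region. Exactness is automatic here because $e^t\alpha_{\mathrm{std}}$ pulls back to zero on each Legendrian time-slice.

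For the pinch move I would use the standard Lagrangian saddle (Lagrangian pair of pants): in appropriate local coordinates near the pinch one writes down a quadratic Lagrangian whose negative end is two parallel Legendrian strands and positive end is the pinched pair, as worked out explicitly by Ekholm--Honda--K\'alm\'an. For the birth move I would use the well-known Lagrangian disk whose positive cylindrical end is the standard $\mathit{tb}=-1$ Legendrian unknot. Each of these two local pieces can be made exact by direct computation of a primitive $f$ for $(e^t\alpha_{\mathrm{std}})|_L$.

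The final step is to glue these local pieces into one globally defined exact Lagrangian cobordism, and this is where I expect the main technical obstacle. While each local model is individually exact, the primitives must be compatible across the cylindrical regions separating consecutive elementary pieces, up to a locally constant shift on each component. On a cylindrical piece the primitive is in fact locally constant on each connected component, so matching reduces to adjusting finitely many additive constants; the care required here is precisely what is treated in the cited references. Once this gluing is carried out, the concatenated Lagrangian is exact and realizes the desired cobordism from $\Lambda_-$ to $\Lambda_+$.
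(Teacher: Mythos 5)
First, note that the paper does not prove this statement at all: Theorem~\ref{thm:decomp} is quoted from the literature (Bourgeois--Sabloff--Traynor, Chantraine, Dimitroglou Rizell, Ekholm--Honda--K\'alm\'an) and closed with an immediate \verb|\qed|, so there is no in-paper proof to compare against. Your outline does follow the standard architecture of those references: elementary local models for isotopy, pinch, and birth, glued along cylindrical ends with primitives matched up to locally constant shifts. The pinch and birth models and the remark about matching additive constants on cylindrical pieces are all in order.

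There is, however, a genuine gap in your treatment of the Legendrian isotopy moves. The ``trace'' of a Legendrian isotopy, i.e.\ the surface $L=\{(t,p): p\in\Lambda_{\rho(t)}\}$, is \emph{not} Lagrangian in general, and your claim that exactness is automatic because $e^{t}\alpha_{\mathrm{std}}$ vanishes on each time-slice fails for the same reason: at a point of $L$ the tangent space is spanned by $T\Lambda_{\rho(t)}\subset\xi_{\mathrm{std}}$ together with $\partial_t+\rho'(t)X$, where $X$ is the generating contact vector field, so
\[
\left(e^{t}\alpha_{\mathrm{std}}\right)\big|_{L}=e^{t}\rho'(t)\,H\,dt,\qquad H=\alpha_{\mathrm{std}}(X),
\]
and $d\bigl((e^{t}\alpha_{\mathrm{std}})|_{L}\bigr)=e^{t}\rho'(t)\,dH\wedge dt$ is nonzero unless the contact Hamiltonian $H$ is constant along each $\Lambda_s$. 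Likewise, cutting off the lifted Hamiltonian vector field by a bump function in $t$ destroys the property of being symplectic, so the flow argument does not rescue the construction. The actual construction in Ekholm--Honda--K\'alm\'an (and in Chantraine and Bourgeois--Sabloff--Traynor) corrects the trace by a push-off in the Reeb direction determined by $H$ to restore exactness, and then requires $\rho'$ to be sufficiently small (``slowing down'' the isotopy) to guarantee that the corrected immersion is embedded. This correction-plus-slowness step is the essential content of the isotopy case and is missing from your argument; the gluing of primitives that you flag as the main obstacle is, by comparison, the routine part.
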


Finally, we say that a Lagrangian cobordism is \emph{decomposable} if it is the result of stacking elementary cobordisms. It is not known if every exact Lagrangian cobordism is decomposable.

\begin{proof}[Proof of Lemma~\ref{lemma:posbraidstodecompcobs}]
By the definition of decomposable Lagrangian cobordism, we only need to show that each move corresponds to an elementary cobordism. For each move, this is described by one of the figures below.

\begin{figure}[htbp]
\includegraphics[width=12.44cm]{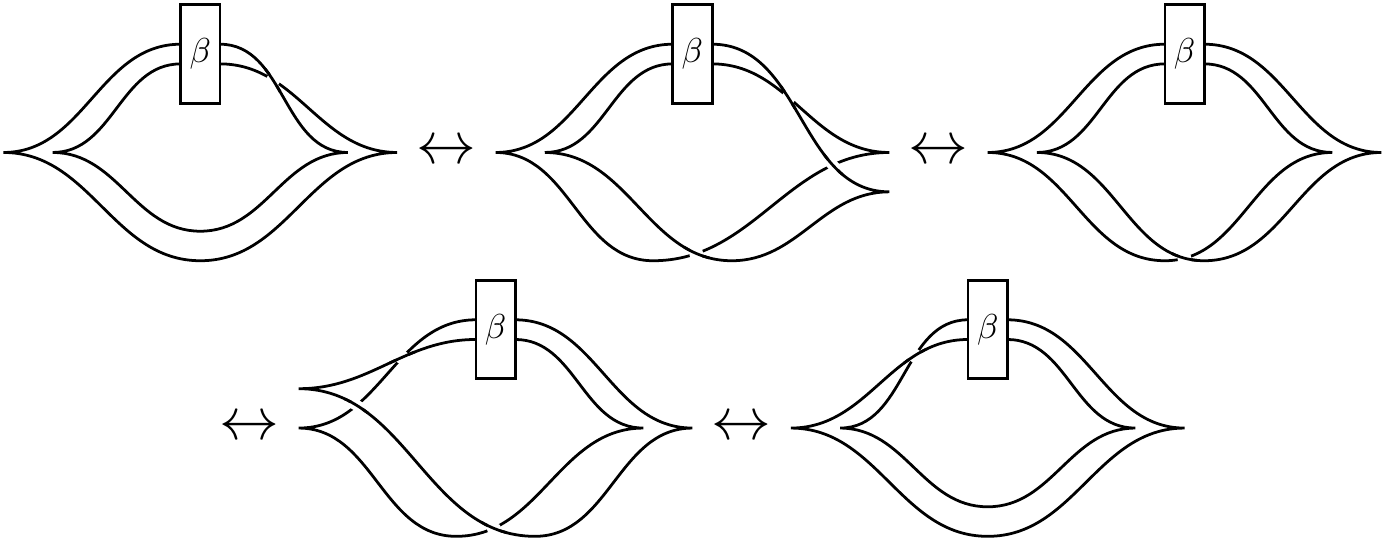}
\caption{Sequence of Legendrian isotopy to achieve a cyclic permutation. The rest of the strands are omitted.}
\label{fig:permutation}
\end{figure}

\begin{figure}[htbp]
\includegraphics[width=8cm]{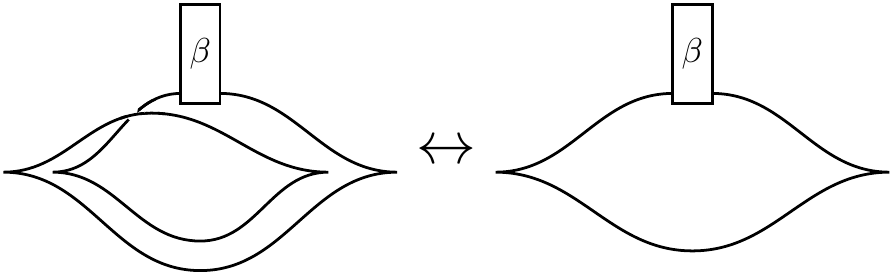}
\caption{Legendrian isotopy to achieve positive Markov stabilization. The rest of the strands are omitted.}
\label{fig:markov}
\end{figure}

\begin{figure}[htbp]
\includegraphics[width=13.33cm]{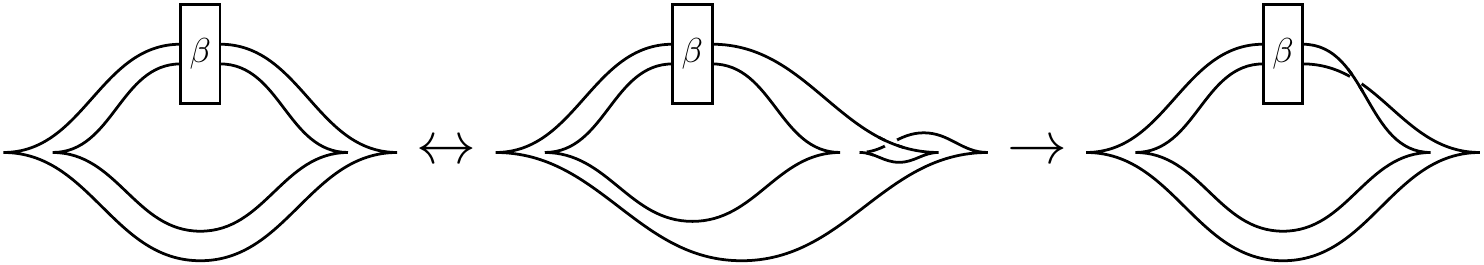}
\caption{A Legendrian isotopy and a pinch move to achieve addition of a positive Artin generator. The rest of the strands are omitted.}
\label{fig:artin}
\end{figure}

Figure~\ref{fig:permutation} shows that a cyclic permutation can be obtained by a Legendrian isotopy. Figure~\ref{fig:markov} shows that positive Markov stabilizations and destabilizations can be obtained by a Legendrian isotopy. Finally, Figure~\ref{fig:artin} shows that adding a positive Artin generator can be obtained by a Legendrian isotopy and a pinch. This completes the proof.\end{proof}

\begin{proof}[Proof of Theorem~\ref{thm:genus1deccob}]If there is a genus one exact Lagrangian cobordism from $\Lambda_{p,q}$ to $\Lambda_{p',q'}$, then there exists a genus one cobordism between $T_{p,q}$ and $T_{p',q'}$. Hence by Theorem~\ref{thm:main}, pairs which do not appear in (1)--(7) in Theorem~\ref{thm:main} do not bound genus one exact Lagrangian cobordisms.

Recall from Remark~\ref{rmk:cobordisms} that the genus one cobordisms for the pairs appearing in (1)--(5) and $\{T_{2,11}, T_{4,5}\}$ are obtained by removing two generators (which corresponds to a genus one decomposable cobordism by Lemma~\ref{lemma:posbraidstodecompcobs}), and the genus one cobordisms between the remaining five pairs of (6) are obtained by removing one generator and adding one (which corresponds to a genus one cobordism given as the concatenation of two decomposable cobordisms with first Betti number 1). Here, by concatenation we mean the operation that glues two decomposable cobordisms that do not match the directions, so their union is not necessarily Lagrangian. Therefore, we see that there exist genus one decomposable Lagrangian cobordisms for the ordered pairs appearing in (1)--(6) in Theorem~\ref{thm:genus1deccob}, and the last paragraph of the theorem follows.

Moreover, it is known that the Thurston-Bennequin number of $\Lambda_{p,q}$ is $pq-p-q$ \cite{Tanaka:1999-1}. Recall that $2g_4(T_{p,q})-1$ is also $pq-p-q$. By \eqref{eqn:lagtb}, if there is a genus one exact Lagrangian cobordism from $\Lambda_{p,q}$ to $\Lambda_{p',q'}$, then the $4$-ball genera of $T_{p,q}$ and $T_{p',q'}$ are different. Note that each pair of (6) in Theorem~\ref{thm:main} excluding $\{T_{2,11}, T_{4,5}\}$ has the same $4$-ball genus. This concludes the proof of Theorem~\ref{thm:genus1deccob}.\end{proof}


\begin{appendix}
\section{Graphical representation of the results}\label{appendixA}

We end this article by summarizing Theorem~\ref{thm:main}, \ref{thm:genus1deccob}, and Corollary~\ref{cor:3} for coprime $2\leq p<q\leq 24$ with $p\leq 11$ in the following graphs.
\begin{figure}[htbp]
\includegraphics[width=14.5cm]{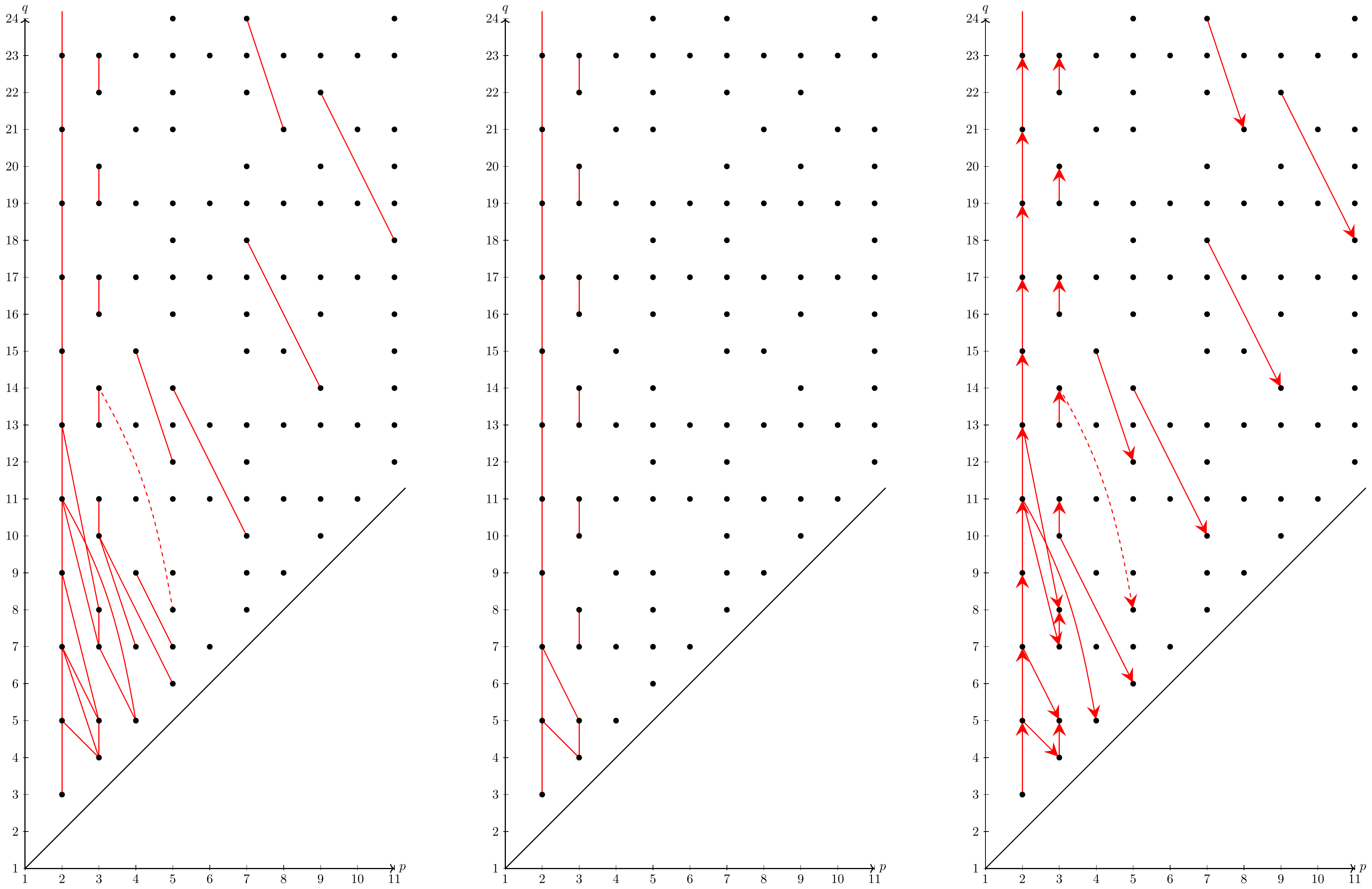}
\caption{Left: Cobordism distance one pairs. Middle: Gordian distance one pairs. Right: Pairs that have a Lagrangian cobordism of genus one between them.
\\
Dotted: The one pair for which we do not know whether there exists a genus one cobordism.}
\label{fig:graph}
\end{figure}
\end{appendix}

\bibliographystyle{alpha}
\def\MR#1{}
\bibliography{bib}
\end{document}